\newcommand{\+}[1]{\ensuremath{\overset{+}{#1}}}
\newcommand{\pli}[1]{\ensuremath{\overset{+}{\imath}}}
\newcommand{\mi}[1]{\ensuremath{\overset{-}{#1}}}
\newcommand{\arb}[1]{\ensuremath{\overset{\pm}{#1}}}
\newcommand{\sym}{\ensuremath{\mathrm{Sym}}}
\newcommand{\droot}{\ensuremath{\Phi_{\mathrm{long}}}}
\newcommand{\short}{\ensuremath{\Phi_{\mathrm{short}}}}
\newcommand{\supp}{\ensuremath{\mathrm{supp}}}
\newtheorem{thm}{Theorem}[section]
\newtheorem{lemma}[thm]{Lemma}         
\newtheorem{prop}[thm]{Proposition}
\newtheorem{defn}[thm]{Definition}
\newtheorem{cor}[thm]{Corollary}
\begin{document}
    \title{\Large\textbf{Maximal Length Elements of Excess Zero in Finite Coxeter Groups}}
\date{}
      \author{S.B. Hart and P.J. Rowley\thanks{The authors wish to acknowledge support for this work from
      a London Mathematical Society Research in Pairs Grant.}}
  \maketitle
\vspace*{-10mm}

\section{Introduction}

Conjugacy classes of finite Coxeter groups have long been of
interest, the correspondence between partitions and conjugacy
classes of the symmetric groups having been observed by Cauchy
\cite{cauchy} in the early days of group theory. For Coxeter groups
of type $B_n$ and $D_n$, descriptions of their conjugacy classes, by
Specht \cite{specht} and Young \cite{young}, have also been known
for a long time. In 1972, Carter \cite{carter} gave a uniform and
systematic treatment of the conjugacy classes of Weyl groups. More
recently, Geck and Pfeiffer \cite{gkpf} reworked Carter's
description from more of an algorithmic standpoint. Motivation for
investigating the conjugacy classes of finite Coxeter groups, and
principally those of the irreducible finite Coxeter groups, has come
from many directions, for example in the representation theory of
these groups and the classification of maximal tori in groups of Lie
type (see \cite{carter2}). The behaviour of length in a conjugacy
class is frequently important. Of particular interest are those
elements of minimal and maximal lengths in their class. Instrumental to Carter's work was establishing the fact that in a finite Coxeter group every element is either an involution or a product of two involutions. Given the importance of the length function, it is natural to ask whether for an element $w$ it is possible to choose two involutions $\sigma$ and $\tau$ with $w = \sigma\tau$
in such a way that combining a reduced expression for $\sigma$ with one for $\tau$ produces a reduced expression for $w$. That is, can we ensure that the length $\ell(w)$ is given by $\ell(w) = \ell(\sigma) + \ell(\tau)$? Not surprisingly, the answer to this is, in general, no. This naturally leads to introducing the concept of {\em excess} of $w$, denoted by $e(w)$, and defined by
$$e(w) = \min\{\ell(\sigma) + \ell(\tau) - \ell(w): \sigma\tau = w, \sigma^2 = \tau^2 = 1\}.$$ In \cite{invp}, \cite{zeroex} and \cite{onxs}, various properties of excess were investigated. It was shown, among other things, that in every conjugacy class of a finite Coxeter group $W$ there is an element $w$ of minimal length in the conjugacy class, such that the excess of $w$ is zero \cite[Theorem 1.1]{zeroex}. This raises the question as to whether there is also an element of maximal length and excess zero.\\

In this paper we address this question and show that elements of maximal length and excess zero do indeed exist.

\begin{thm}
\label{main} Let $W$ be a finite Coxeter group and $C$ a conjugacy class of $W$. Then there exists an element $w$ of maximal length in $C$ such that $e(w) = 0$.
\end{thm}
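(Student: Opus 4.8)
\emph{Strategy of proof.}

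First I would reduce to the case that $W$ is irreducible. If $W = W_1 \times \cdots \times W_r$ with each $W_i$ irreducible, then a conjugacy class $C$ of $W$ is a product $C_1 \times \cdots \times C_r$, and both $\ell$ and $e$ are additive over this product: a factorisation $w = \sigma\tau$ of $w = (w_1,\dots,w_r)$ into involutions is the same thing as a family of such factorisations $w_i = \sigma_i\tau_i$, while lengths add. Hence a maximal length element of excess zero in $C$ is just a tuple of maximal length elements of excess zero, one in each $C_i$, so we may assume $W$ irreducible. The statement \cite[Theorem 1.1]{zeroex} is the ``minimal length'' analogue of what is wanted, so one might hope to transfer it via the length-reversing bijection $w \mapsto w w_0$, where $w_0$ is the longest element of $W$; but $w w_0$ lies in a genuine conjugacy class only when $w_0$ is central (which fails for types $A_n$, $D_n$ with $n$ odd, $E_6$, and $I_2(m)$ with $m$ odd), and even then the excess does not transfer for free, since a minimal length element of a class of excess zero need not be an involution. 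So I expect the proof to proceed by a direct, type-by-type analysis, in the spirit of \cite{zeroex}.

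It helps to recast the problem. If $w = \sigma\tau$ with $\sigma^2 = \tau^2 = 1$ then $\sigma w \sigma = \tau\sigma = w^{-1}$; conversely any involution $\sigma$ with $\sigma w \sigma = w^{-1}$ yields the factorisation $w = \sigma\cdot(\sigma w)$ with $\sigma w$ an involution. Hence $e(w) = \min\{\ell(\sigma) + \ell(\sigma w) - \ell(w)\}$, the minimum over involutions $\sigma$ inverting $w$, and the task is: for a well-chosen maximal length $w \in C$, produce such a $\sigma$ with $\ell(\sigma) + \ell(\sigma w) = \ell(w)$. For the dihedral groups $I_2(m)$ this is immediate: the maximal length element of a rotation class is a reduced alternating word in the two generators, which is visibly the product of the initial generator and a reflection with lengths adding, while reflection classes consist of involutions. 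The exceptional groups $E_6, E_7, E_8, F_4, G_2, H_3, H_4$ (together with the small-rank classical coincidences) are finite in number and can be settled by machine computation: enumerate the maximal length elements of each class and, for one of them, search through the involutions inverting it.

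The substance lies in the classical families $A_n$, $B_n$, $D_n$, which I would treat via the permutation and signed-permutation models, with conjugacy classes indexed by partitions, respectively by pairs of partitions (signed cycle types), a few classes of $D_n$ splitting in two. For each class I would write down an explicit representative $w$, obtained by placing on suitably interleaved blocks of $\{1,\dots,n\}$ the ``longest'' cycle (a positive or negative cycle, in types $B$ and $D$) of each prescribed size, the interleaving chosen so as to maximise the number of (signed) inversions, and then check that this $w$ attains the maximal length in its class. In parallel one exhibits $w = \sigma\tau$, where $\sigma$ and $\tau$ are explicit involutions assembled block-by-block from reversal-type permutations and sign changes — one a little shorter than the other — and verifies that the (signed) inversion sets contributed by $\sigma$, by $\tau$, and by distinct blocks are pairwise disjoint, whence $\ell(w) = \ell(\sigma) + \ell(\tau)$ and $e(w) = 0$. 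The natural building blocks here are a single (positive or negative) cycle on one block, together with a count of the inversions between a block and its complement.

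The main obstacle is exactly this twofold verification: proving the constructed element really is of maximal length in its class, and at the same time arranging the two involutions so that no (signed) inversion is counted twice. The inversion-counting is delicate, and type $D_n$ is the hardest case, where one must respect the parity constraint on the number of negative cycles and, for the split classes, check that the constructed element \emph{and} its image under the graph automorphism (a single sign change) each admit a factorisation of the required kind.
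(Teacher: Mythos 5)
Your outline follows the same skeleton as the paper: reduce to irreducible groups via additivity of $\ell$ and $e$ over direct products, dispose of the dihedral and exceptional types directly and by machine, and for the classical families exhibit an explicit maximal length representative of each class together with two involutions, built block-by-block from reversal permutations and sign changes, whose inversion sets are shown to be disjoint so that Equation \eqref{leneq} forces excess zero (this is exactly Propositions \ref{maintypea} and \ref{maintypebd}, including your remark about the graph automorphism for the split classes of $D_n$). The one place where your route genuinely diverges is the part you yourself flag as the main obstacle: proving that the constructed element is of maximal length. You discard the length-reversing map $w\mapsto ww_0$ on the grounds that $w_0$ is not central in $D_n$ for $n$ odd, and propose instead a direct inversion count for a representative chosen ad hoc; but the paper's key observation is that the transfer survives if one works inside the overgroup of type $B_n$, where $w_0$ \emph{is} central: a lower bound for $|\Lambda|$ and $|\Sigma|$ over a whole $B_n$-class (Lemma \ref{lmin}), an exact computation for the special element $u_C$ (Lemma \ref{uc}), and multiplication by $w_0$ then yield maximal $B$-length \emph{and} maximal $D$-length representatives simultaneously (Theorem \ref{maxl}, Theorem \ref{thm2}), with type $A$ quoted from Kim. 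Your plan is workable in principle (the needed upper bound over the class would come from the same orbit-counting), but as stated it leaves the heaviest step as a bare ``check''; note also that the resulting maximal representatives in types $B_n$/$D_n$ are not interleaved as in Kim's stair form but are products of signed cycles on consecutive blocks, which is what makes the disjointness of the inversion sets of $\sigma$ and $\tau$ clean to verify.
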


Theorem \ref{main}, of course, prompts the question as to how many maximal elements of a conjugacy class have excess zero. Intriguingly, calculations in classical Weyl groups up to rank 10 reveal that every element of maximal length in a conjugacy class has excess zero. Equally intriguing is the fact that $E_6$ has, out of its 25 conjugacy classes, two conjugacy classes for which this is not the case, while $E_7$ has exactly one conjugacy class out of 60 conjugacy classes for which not all maximal length elements have excess zero. See Section 5 for further discussion of these cases. We remark that, in stark contrast, there are many instances where not all minimal length elements in a conjugacy class have excess zero (see \cite{zeroex}).\\

In the course of proving Theorem \ref{main},  we need a workable description of representatives of maximal length in conjugacy classes of Coxeter groups of types $A_n$, $B_n$ and $D_n$.  Minimal
length elements in conjugacy classes of Coxeter groups have received considerable attention -- see
\cite{gkpf}. Now every
finite Coxeter group $W$ possesses a (unique) element
$w_0$ of maximal length in $W$. For $C$ a conjugacy class of $W$,
set $C_0 = Cw_0 = \{ww_0: w \in C\}$. If, as happens in many
cases, $w_0 \in Z(W)$, then $C_0$ is also a conjugacy class of
$W$. Moreover, $w \in C$ has minimal length in $C$ if and only if
$ww_0$ has maximal length in $C_0$. Thus information about
maximal length elements in a conjugacy class may be obtained from
that known about minimal length elements. Among the finite
irreducible Coxeter groups, only those of type $I_{m}$ ($m$ odd),
$A_n$, $D_n$ ($n$ odd) and $E_6$ have $w_0 \notin Z(W)$.
The first of these, being just dihedral groups, are quickly dealt
with. A description of certain maximal length elements in conjugacy classes
of type $A_n$ were given by Kim \cite{kim} and for $E_6$ see Table
III of \cite{twisted}. In Section \ref{bdl} of this paper we deal with type $D_n$ (and in doing so give a result for type $B_n$ at the same time). Representatives of maximal length
for type $D_n$ could be extracted from Section 4 of \cite{twisted}, but here we give a more direct treatment that deals with both type $B_n$ and type $D_n$ and gives more information about the number of long and short roots taken negative by elements of maximal length. Theorem \ref{thm1} gives an expression for the maximal length of elements in a given conjugacy class for type $D_n$ while Theorem \ref{thm2} below gives a list of maximal length class representatives in types $B_n$ and $D_n$. This is what we require for our work on elements of excess zero.\\

Theorems \ref{thm2} and \ref{thm1} are consequences of a more general result, Theorem \ref{maxl}, concerning $D$-lengths and $B$-lengths of elements in a Coxeter group $W$ of type $B_n$ ($D$-length and $B$-length will be defined in Section \ref{bdl}). Suppose $\hat W$ is of type $D_n$. Then we may regard $\hat W$ as a canonical index 2 subgroup of $W$ where $W$ is a Coxeter group of type $B_n$. Let $C$ be a conjugacy class of $W$ that is contained in $\hat W$. In the case when $n$ is odd, $w_0 \neq \hat w_0$ (the longest element of $\hat W$) and consequently $C_0 = Cw_0$ is not even a subset of $\hat W$, much less a conjugacy class of $\hat W$. However, working in the wider context of $W$, we are able
to obtain elements of maximal $D$-length in $C$ from suitable elements of minimal $B$-length in $C_0$. Therefore, in the course of establishing Theorem \ref{thm1}, we also produce representative elements of maximal length in their conjugacy class. To describe these elements, recall that conjugacy classes in types $B_n$ and $D_n$ are parameterized by signed cycle type (this will be described fully in Section \ref{bdl}), with some classes splitting in type $D_n$.\\

Let $\lambda = (\lambda_1, \ldots, \lambda_m)$ be a partition of $n$, and let $\rho \geq 0$. For ease of notation set $\mu_i = \sum_{j=1}^{i-1} \lambda_j$ (and by convention $\mu_1 = 0$).
We then define the {\em corresponding signed element} $w_{\lambda,\rho}$ to be
$w_{\lambda,\rho} = w_1 \cdots w_m$ where $$w_i = \left\{\begin{array}{ll}
(\mi{\mu_i + 1}, \mi{\mu_i + 2}, \ldots, \mi{\mu_{i+1}-1}, \mi{\mu_{i+1}})
 & \text{ if $1\leq i \leq \rho$;}\\
(\mi{\mu_i + 1}, \mi{\mu_i + 2}, \ldots, \mi{\mu_{i+1}-1}, \+{\mu_{i+1}}) & \text{ if $ \rho < i \leq m$.}
\end{array}\right.$$

We call $\lambda$ a {\em maximal split partition} (with respect to $\rho$) if $\lambda_1 \geq \cdots \geq \lambda_{\rho}$ and $\lambda_{\rho+1} \geq \cdots \geq \lambda_m$. For example, if $\lambda = (5,2,4,3)$ and $\rho = 2$, then $$w_{\lambda,\rho} = (\mi 1,\mi 2,\mi 3,\mi 4,\mi 5)(\mi 6,\mi 7)(\mi 8,\mi 9,\mi{10},\+{11})(\mi{12},\mi{13},\+{14}).$$

 Our second main result in this paper is the following.

\begin{thm}\label{thm2} Let $W$ be of type $B_n$ and $\hat W$ its canonical subgroup of type $D_n$.
Every conjugacy class of $W$ contains an element $w_{\lambda,\rho}$, where $\lambda$ is a maximal split partition with respect to $\rho$. Furthermore each element $w_{\lambda,\rho}$ has maximal $B$-length and maximal $D$-length in its conjugacy class of $W$.
\end{thm}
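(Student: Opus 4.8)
The plan is to treat the two assertions separately; the first is quick bookkeeping, the second carries the work. For the first assertion I would simply unwind the parametrisation of conjugacy classes of $W$ (type $B_n$) by signed cycle type, recalled in Section~\ref{bdl}: a class $C$ is determined by the multiset of lengths of its positive cycles together with that of its negative cycles. Given $C$, read off $\rho$ and these two length-multisets, list each in weakly decreasing order, and concatenate to obtain a composition $\lambda=(\lambda_1,\dots,\lambda_m)$. By construction $\lambda$ is a maximal split partition with respect to $\rho$, and $w_{\lambda,\rho}$ has exactly the signed cycle type of $C$, so $w_{\lambda,\rho}\in C$; this settles the first statement.

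For the second assertion the crucial point is that the longest element $w_0$ of $W$ acts on the reflection representation as $-1$, so $w_0\in Z(W)$. Hence $C_0=Cw_0$ is again a conjugacy class; moreover $\ell(uw_0)=\ell(w_0)-\ell(u)$ for all $u\in W$, and since $w_0$ negates every root of $W$ --- in particular every root of the $D_n$ subsystem --- the sets of positive roots of $\hat W$ taken to negative roots by $uw_0$ and by $u$ are complementary. Therefore $u$ has maximal $B$-length (respectively $D$-length) in $C$ precisely when $uw_0$ has minimal $B$-length (respectively $D$-length) in $C_0$. This reduces everything to the well-studied minimal-length theory: I would invoke the classification of minimal-length elements of conjugacy classes in type $B_n$ (see \cite{gkpf}), observe that passing from $C$ to $C_0$ merely toggles the sign-type of each odd-length cycle, and then exhibit an element of $C_0$ that is simultaneously of minimal $B$-length and of minimal $D$-length.

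The concrete verification would then proceed by direct computation with the combinatorial length formulae for $W$ and $\hat W$, expressed through the inversions, negative entries and negative-sum pairs of the underlying signed permutation. One computes $\ell(w_{\lambda,\rho})$ and the $D$-length of $w_{\lambda,\rho}$ explicitly, computes likewise the minimal $B$-length and minimal $D$-length over $C_0$, and checks that $\ell(w_{\lambda,\rho})=\ell(w_0)-\min_{u\in C_0}\ell(u)$ together with the $D$-length analogue. The maximal split hypothesis enters exactly here: arranging the cycle-blocks along the coordinate axis so that lengths are weakly decreasing within each of the two sign-types is precisely what minimises the interference between blocks for $C_0$, and hence maximises it for $C$. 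As a by-product this computation also records how many long and how many short roots $w_{\lambda,\rho}$ sends to negatives, which is the extra information feeding into Theorem~\ref{thm1}.

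The main obstacle, I expect, is the \emph{simultaneous} optimisation of $B$-length and $D$-length. The $D$-length of $u\in W$ is its $B$-length minus the number of negative entries of $u$, and that number is not a conjugacy invariant, so an element minimising $B$-length in $C_0$ need not minimise $D$-length a priori; one must show that the single arrangement dictated by the maximal split condition achieves both. A secondary complication is that for $n$ odd one has $\hat w_0\neq w_0$, so $C_0$ need not lie in $\hat W$ at all; the argument must then be carried out inside the ambient group of type $B_n$ rather than within $\hat W$, which is the standpoint of the general result (Theorem~\ref{maxl}) from which this theorem is deduced.
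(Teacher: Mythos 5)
Your overall reduction coincides with the paper's (Theorem \ref{maxl}): since $w_0$ is central and negates every root, maximal $B$-length and maximal $D$-length in $C$ correspond to minimal $|\Lambda|+|\Sigma|$ and minimal $|\Lambda|$ over $C_0=Cw_0$, and the argument must be run inside the ambient group of type $B_n$. Your first paragraph (signed cycle types, so every class contains some $w_{\lambda,\rho}$) is fine. The problem is that the decisive step of the second half is named but not proved. You correctly identify the main obstacle -- a minimizer of $B$-length in $C_0$ need not minimize $|\Lambda|$, because $|\Sigma|$ (the number of minus signs) is not a class invariant -- but your proposal offers nothing to overcome it beyond the assertion that ``one must show'' the maximal split arrangement achieves both. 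The paper's entire content at this point is Lemma \ref{lmin} together with Lemma \ref{uc}: class-wide lower bounds for the two statistics \emph{separately}, namely $|\Lambda(w)|\geq n-z_C+2\sum_{i=1}^{\nu_C-1}(\nu_C-i)\lambda_i$ and $|\Sigma(w)|\geq\nu_C$ for every $w$ in the class, obtained by counting sign changes along $\langle w\rangle$-orbits of long roots (orbits inside a positive cycle, inside a negative cycle, and spanning a pair of negative cycles), followed by the explicit computation showing that the single element $u_{C_0}$ attains both bounds. It is this simultaneous attainment that makes $u_{C_0}w_0=w_{\lambda,\rho}$ maximal for both lengths at once; your outline, as it stands, proves at best the $B$-length half.

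A second concrete difficulty is your plan to import the minimal-length classification from \cite{gkpf}. When $n$ is odd and $C_0\not\subseteq\hat W$ (a case you flag yourself), the quantity you need is the minimum of $l_D(w)=|\Lambda(w)|$ over a $B_n$-class lying entirely outside $\hat W$; on that coset $l_D$ is not the length function of any Coxeter group, so neither the type $B_n$ nor the type $D_n$ results of \cite{gkpf} apply -- this is really the twisted-class situation treated in \cite{twisted}, not the ordinary one. The paper sidesteps this precisely by proving its minimal-length statements from scratch for arbitrary $W$-classes (Lemmas \ref{lmin} and \ref{uc}), recovering the \cite{gkpf} representatives only as a by-product in Corollaries \ref{cor1} and \ref{cor2}. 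To complete your argument you would need either to invoke the twisted theory for that coset or, as the paper does, to supply lower bounds of the strength of Lemma \ref{lmin}; the ``direct computation with the combinatorial length formulae'' you describe evaluates $\ell(w_{\lambda,\rho})$ but does not bound the lengths of the remaining elements of the class.
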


 Representatives of minimal length in conjugacy classes of types $B_n$ and $D_n$ appear in \linebreak Theorems 3.4.7 and 3.4.12 of \cite{gkpf}. However we need additional information about elements of minimal length in $W$-conjugacy classes, which gives as a byproduct (in Corollaries \ref{cor1} and \ref{cor2}) an alternative proof that the representatives given in \cite{gkpf} are indeed of minimal length.\smallskip\\
%

In the rest of this section we briefly discuss the proof of Theorem \ref{main}. Given a root system $\Phi$ for a Coxeter group $W$, we have that $\Phi$ is the disjoint union of the set of positive roots $\Phi^+$ and the set of negative roots $\Phi^- = -\Phi^+$. For details on root systems, including these observations, see for example Chapter 5 of \cite{humphreys}. It is well known (for example Proposition 5.6 of \cite{humphreys}) that for any $w$ in $W$, the length $\ell(w)$ is given by $$\ell(w) = |N(w)| = |\{\alpha \in \Phi^+ : w(\alpha) \in \Phi^-\}|.$$
That is, $\ell(w)$ is the number of positive roots taken negative by $w$. We emphasise here that, in line with other work on Coxeter groups, elements of the group will act on the left. It is easy to show that if $w = gh$ for some $g, h \in W$, then \begin{equation}
 \ell(w) = \ell(g) + \ell(h) - 2|N(g)\cap N(h^{-1})|.\label{leneq}
\end{equation}
(Equation \eqref{leneq} is well known but is stated and proved as part of Lemma 2.1 in \cite{zeroex}.) Our method of proving Theorem \ref{main} for the classical Weyl groups will be as follows. First we will establish a collection of elements $w$ constituting a representative of maximal length for each conjugacy class of the group under consideration. For each such $w$, we will obtain involutions $\sigma$ and $\tau$ such that $N(\sigma) \cap N(\tau) = \emptyset$ and $\sigma\tau = w$. It follows from Equation \eqref{leneq} that the excess of $w$ is zero.
We conclude this section with two lemmas which will be useful later.
\begin{lemma}
\label{prelim} Let $W$ be a Coxeter group.
Let $g, h \in W$ and suppose $N(g) \cap N(h^{-1}) = \emptyset$. Then $N(gh) = N(h) \dot{\cup} h^{-1}(N(g))$.
\end{lemma}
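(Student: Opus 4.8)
The plan is to verify the set equality $N(gh) = N(h) \mathbin{\dot\cup} h^{-1}(N(g))$ by a direct analysis of where a positive root $\alpha$ is sent by $h$ and then by $g$, using throughout the characterisation $N(w) = \{\alpha \in \Phi^+ : w(\alpha) \in \Phi^-\}$ together with the hypothesis $N(g) \cap N(h^{-1}) = \emptyset$. First I would record the reformulation of the hypothesis: $N(h^{-1}) = h(N(h))$ is precisely the set of positive roots sent \emph{to} negative roots under $h^{-1}$, equivalently $-h(N(h))$ is the set of negatives sent to positives — in other words, $N(g) \cap N(h^{-1}) = \emptyset$ says that $g$ sends every root in $N(h^{-1})$ to a negative root. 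I will also note the general fact $\ell(gh) = \ell(g) + \ell(h)$ in this situation (immediate from \eqref{leneq}), which tells us the two sets on the right-hand side are disjoint and that a counting argument will close the proof once one inclusion is established.

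Next I would carry out the case analysis on $\alpha \in \Phi^+$ according to the sign of $h(\alpha)$. \emph{Case 1: $h(\alpha) \in \Phi^+$.} Then $\alpha \notin N(h)$, and $h(\alpha)$ is a positive root not lying in $h(N(h)) = N(h^{-1})$; hence $h(\alpha) \notin N(g)$ by the hypothesis, so $g(h(\alpha)) \in \Phi^+$ and $\alpha \notin N(gh)$. On the other side, $\alpha \notin N(h)$, and $\alpha \in h^{-1}(N(g))$ would force $h(\alpha) \in N(g)$, which we just excluded; so $\alpha$ lies in neither set. \emph{Case 2: $h(\alpha) \in \Phi^-$, i.e. $\alpha \in N(h)$.} Write $h(\alpha) = -\beta$ with $\beta \in \Phi^+$; in fact $\beta \in h(N(h)) = N(h^{-1})$, hence $\beta \notin N(g)$, so $g(\beta) \in \Phi^+$ and therefore $g(h(\alpha)) = -g(\beta) \in \Phi^-$, giving $\alpha \in N(gh)$. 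Correspondingly $\alpha \in N(h)$, so $\alpha$ lies in the right-hand side. This shows $N(gh) \cap \{\alpha : h(\alpha)\in\Phi^-\}$ agrees with $N(h)$, and that $N(gh)$ contains no $\alpha$ with $h(\alpha)\in\Phi^+$.

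It remains to account for the roots entering through $h^{-1}(N(g))$: if $\gamma \in N(g)$, then since $N(g)\cap N(h^{-1})=\emptyset$ we have $\gamma \notin h(N(h))$, so $h^{-1}(\gamma) \in \Phi^+$; set $\alpha = h^{-1}(\gamma) \in \Phi^+$, and then $h(\alpha)=\gamma \in \Phi^+$ while $g(h(\alpha)) = g(\gamma) \in \Phi^-$, so $\alpha \in N(gh)$ — but this contradicts the conclusion of Case 1 unless I have mis-signed something, so the honest route is to instead observe that every such $\alpha$ has $h(\alpha)\in\Phi^+$, hence by Case 1 analysis $\alpha\notin N(gh)$; thus $h^{-1}(N(g))$ \emph{must} be absorbed differently. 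The correct bookkeeping: partition $N(gh)$ by the sign of $h(\alpha)$ as above, obtaining $N(h)$ from the negative part and (re-examining Case 1 with $h(\alpha)\in N(g)$ now \emph{allowed}, since the hypothesis only excludes $h(\alpha)\in N(h^{-1})\cap N(g)$, and $h(\alpha)\in\Phi^+$ with $\alpha\notin N(h)$ does \emph{not} put $h(\alpha)$ in $h(N(h))$) the set $\{\alpha\in\Phi^+\setminus N(h): h(\alpha)\in N(g)\} = h^{-1}(N(g))$ from the positive part. The main obstacle is exactly this sign/containment bookkeeping in Case 1 — making sure one does not conflate ``$h(\alpha)$ positive'' with ``$h(\alpha)\notin N(g)$'', since the hypothesis $N(g)\cap N(h^{-1})=\emptyset$ only kills the overlap of $N(g)$ with $h(N(h))$, not with all of $\Phi^+$. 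Once the two halves are correctly identified, disjointness of $N(h)$ and $h^{-1}(N(g))$ follows since they consist of roots with $h(\alpha)\in\Phi^-$ and $h(\alpha)\in\Phi^+$ respectively, and we are done; alternatively the length identity $\ell(gh)=\ell(g)+\ell(h)$ confirms the cardinalities match.
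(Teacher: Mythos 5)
Your eventual strategy --- partition $N(gh)$ according to the sign of $h(\alpha)$, use the hypothesis to get $N(h)\subseteq N(gh)$, and identify the rest of $N(gh)$ with $h^{-1}(N(g))$ --- is exactly the paper's, and the final ``correct bookkeeping'' paragraph does describe a valid proof. But the argument as written is not sound, because it is built on the false identity $N(h^{-1})=h(N(h))$. By definition $h$ sends every root of $N(h)$ to a negative root, so $h(N(h))\subseteq\Phi^-$, while $N(h^{-1})\subseteq\Phi^+$; the correct statement is $N(h^{-1})=-h(N(h))$. This sign slip is precisely what produces the false conclusion of your Case 1, namely that $h(\alpha)\in\Phi^+$ forces $\alpha\notin N(gh)$: that would give $N(gh)=N(h)$, contradicting the lemma and the identity $\ell(gh)=\ell(g)+\ell(h)$ you yourself invoked. (Note that if $\alpha\in\Phi^+$ and $h(\alpha)\in\Phi^+$ then automatically $h(\alpha)\notin N(h^{-1})$, so the hypothesis says \emph{nothing} about whether $h(\alpha)\in N(g)$ in that case.) The same error infects the stated justification in Case 2 (``$\beta\in h(N(h))=N(h^{-1})$'' is impossible since $\beta$ is positive and $h(N(h))$ consists of negative roots; the true reason $\beta\in N(h^{-1})$ is that $h^{-1}(\beta)=-\alpha\in\Phi^-$), and your opening reformulation of the hypothesis also has the sign backwards: it says $g$ sends every root of $N(h^{-1})$ to a \emph{positive} root.

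Once the identity is corrected, your patched partition coincides with the paper's proof: for $\alpha\in N(h)$ the root $-h(\alpha)$ lies in $N(h^{-1})$, hence not in $N(g)$, so $gh(\alpha)\in\Phi^-$ and $N(h)\subseteq N(gh)$; for $\gamma\in N(g)$ the hypothesis gives $\gamma\notin N(h^{-1})$, so $h^{-1}(\gamma)\in\Phi^+$, whence $h^{-1}(N(g))\subseteq\Phi^+$ and $h^{-1}(N(g))\subseteq N(gh)$; any $\alpha\in N(gh)\setminus N(h)$ has $h(\alpha)\in\Phi^+$ and $gh(\alpha)\in\Phi^-$, so lies in $h^{-1}(N(g))$; and the two pieces are disjoint because $h$ maps one into $\Phi^-$ and the other into $\Phi^+$. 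So the repair is local, but as submitted the proof contains a step that is simply false and a correction that still cites the erroneous identity, so it cannot be accepted without rewriting the case analysis with the signs fixed.
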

\begin{proof}
Note that $|N(h) \cap h^{-1}N(g)| = |hN(h) \cap N(g)| \leq |\Phi^{-} \cap N(g)| = 0$. So $N(h)$ and $h^{-1}(N(g))$ are indeed disjoint. Suppose $\alpha \in N(h)$. Then $gh(\alpha) \in \Phi^+$ would imply that $h(\alpha) \in -N(g)$, which implies $-h(\alpha) \in N(h^{-1}) \cap N(g)$, a contradiction. Hence $gh(\alpha) \in \Phi^-$, meaning $N(h) \subseteq N(gh)$. Now suppose $\alpha \in N(gh)\setminus N(h)$. Then $h(\alpha) \in \Phi^+$ but $gh(\alpha) \in \Phi^-$. Therefore $\alpha \in h^{-1}(N(g))$. Conversely, since $N(h^{-1}) \cap N(g) = \emptyset$, we have $h^{-1}(N(g)) \subseteq \Phi^+$ and so $h^{-1}(N(g)) \subseteq N(gh)$. Therefore $N(gh) = N(h) \dot{\cup} h^{-1}(N(g))$.
\end{proof}

\begin{lemma}
\label{useful} Let $W$ be a Coxeter group.
Suppose $t_1, t_2, \ldots, t_m$ are involutions with the property that whenever $i\neq j$ we have $t_i(N(t_j)) = N(t_j)$. Then  $N(t_1\cdots t_m) = \dot\cup_{i=1}^m N(t_i)$.
\end{lemma}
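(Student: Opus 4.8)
The plan is to induct on $m$, with Lemma \ref{prelim} doing the work at each step. The base case $m=1$ is trivial, so for $m \geq 2$ I would set $g = t_1\cdots t_{m-1}$ and $h = t_m$ and try to verify the hypothesis $N(g)\cap N(h^{-1}) = \emptyset$ of Lemma \ref{prelim}.

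The key preliminary observation is that the hypothesis forces $N(t_1),\dots,N(t_m)$ to be pairwise disjoint: if $\alpha$ lay in $N(t_i)\cap N(t_j)$ with $i\neq j$, then $\alpha\in N(t_i)$ gives $t_i(\alpha)\in\Phi^-$, whereas $\alpha\in N(t_j)$ together with $t_i(N(t_j))=N(t_j)\subseteq\Phi^+$ gives $t_i(\alpha)\in\Phi^+$, a contradiction. I would also record that, since each $t_j$ is an involution, $N(t_j^{-1}) = N(t_j)$.

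With these in hand the induction runs smoothly. The elements $t_1,\dots,t_{m-1}$ satisfy the same hypothesis, so inductively $N(g) = \dot\cup_{i=1}^{m-1}N(t_i)$; by the disjointness observation this set is disjoint from $N(t_m) = N(h^{-1})$, so Lemma \ref{prelim} yields $N(gh) = N(t_m)\,\dot\cup\,t_m(N(g))$. Since $t_m$ permutes each $N(t_i)$ for $i<m$ and these are disjoint, $t_m(N(g)) = \dot\cup_{i=1}^{m-1}t_m(N(t_i)) = \dot\cup_{i=1}^{m-1}N(t_i)$, and hence $N(t_1\cdots t_m) = \dot\cup_{i=1}^{m}N(t_i)$, completing the induction.

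I do not anticipate a genuine obstacle: the only point requiring thought is the pairwise-disjointness observation, and once it is available the argument is a mechanical application of Lemma \ref{prelim} together with the fact that an involution equals its own inverse. The hypothesis $t_i(N(t_j)) = N(t_j)$ is exactly strong enough to supply both the disjointness needed to invoke Lemma \ref{prelim} and the invariance needed to simplify the term $t_m(N(g))$.
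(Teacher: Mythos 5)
Your proof is correct and follows essentially the same route as the paper: induction on the number of factors, verifying the disjointness hypothesis of Lemma \ref{prelim} via the invariance $t_i(N(t_j)) = N(t_j) \subseteq \Phi^+$, and then using that same invariance to rewrite $t_m(N(g))$. The only cosmetic difference is that you isolate pairwise disjointness of the $N(t_i)$ as a preliminary observation, whereas the paper checks $N(t_1\cdots t_k)\cap N(t_{k+1})=\emptyset$ directly inside the induction; the underlying calculation is identical.
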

\begin{proof}
The result clearly holds when $m=1$. Assume the result holds for $m=k$. Set $u_k = t_1t_{2}\cdots t_k$. Then inductively $N(u_k) = \dot\cup_{i=1}^k N(t_i)$. If $\alpha \in N(u_k)$, then $\alpha \in N(t_i)$ for some $i \leq k$ and so $t_{k+1}(\alpha) \in N(t_i) \subseteq \Phi^+$. Thus $N(u_k) \cap N(t_{k+1}) = \emptyset$. Lemma~\ref{prelim} now gives $N(u_{k+1}) =  N(t_{k+1}) \dot\cup t_{k+1}\left(N(u_{k})\right) = \dot{\cup}_{i=1}^{k+1} N(t_i)$. The result follows by induction.
\end{proof}

Finally for $\sigma \in \sym(n)$, the {\em support} of $\sigma$, denoted $\supp(\sigma)$ is simply the set of points not fixed by $\sigma$. That is, $$\supp(\sigma) = \{i \in \{1, \ldots, n\} : \sigma(i) \neq i\}.$$

\section{Type $A_{n-1}$}\label{secan}

The permutation group $\sym(n)$ is a Coxeter group of type $A_{n-1}$. So throughout this section we will set $W = \sym(n)$. In this context then, the length of an element $w$ is the number of inversions, that is the number of pairs $(i,j)$ with $1\leq i < j \leq n$ such that $w(i) > w(j)$. We can also think of this in terms of the root system (which we can consider as a warm up for the type $B_n$ and $D_n$ cases). For the root system $\Phi$ we can take $$\Phi^+ = \{e_i - e_j: 1 \leq i < j \leq n\}$$ and $\Phi^- = -\Phi^+$. Hence $$N(w) = \{e_i - e_j: i < j, w(i) > w(j)\}.$$ For what follows it will sometimes be helpful to consider intervals   $[a,b]$ for $1\leq a < b\leq n$. The group $\sym([a,b])$ is a standard parabolic subgroup of $W$, and by $\Phi^+_{[a,b]}$ we mean $\{e_i - e_j: a \leq i < j \leq b\}$. We note that if $w \in \sym([a,b])$ then $N(w) \subseteq \Phi^+_{[a,b]}$.  The conjugacy classes of $W$ are parameterized by partitions of $n$. Kim \cite{kim} has described a set of representative elements of maximal length in conjugacy classes of $\sym(n)$, using the `stair form'. Following \cite{kim} we give the following definition.
\begin{defn} \label{defkim} Let $n$ be a positive integer.
\begin{enumerate}
\item[(i)] Define the sequence $a_1, a_2, \ldots, a_n$ by $a_{2i-1} = i$ and $a_{2i} = n - (i-1)$. (So $a_1 = 1$, $a_2 = n$, $a_3 = 2$, $a_4 = n-1$ and so on.)
\item[(ii)] Given a partition $\lambda = (\lambda_1,\ldots, \lambda_m)$ of $n$, its {\em corresponding element} is the element of $\sym(n)$ defined by $$w_{\lambda} = w_1w_{2}\cdots w_{m}$$ where $w_i = (a_{\lambda_1 + \cdots + \lambda_{i-1} + 1}, a_{\lambda_1 + \cdots + \lambda_{i-1} + 2}, \ldots, a_{\lambda_1 + \cdots + \lambda_{i-1} + \lambda_{i}})$.
\item[(iii)] Let $\lambda = (\lambda_1,\ldots, \lambda_m)$ be a partition of $n$. Then $\lambda$ is a {\em maximal partition} of $n$ if there exists $\ell$, with $0 \leq \ell \leq m$ such that $\lambda_1, \ldots, \lambda_{\ell}$ are even numbers in any order, and $\lambda_{\ell+1}, \ldots, \lambda_{m}$ are odd numbers in decreasing order. (In \cite{kim} this is referred to as a maximal composition.)
\end{enumerate}
\end{defn}

For example, given the maximal partition (4,5) of 9, the corresponding element of $\sym(9)$ is $(1, 9, 2, 8)(3, 7, 4, 6, 5)$.
Any partition of $n$ can be reordered so as to produce a maximal partition. Therefore each conjugacy class can be represented by a maximal partition.  We can now state the main result of \cite{kim}.

\begin{thm}
[Kim, \cite{kim}] \label{kimthm}
Let $\lambda = (\lambda_1,\ldots, \lambda_m)$ be a maximal partition of $n$. The corresponding element $w_\lambda$ of $\lambda$ has maximal length in its conjugacy class.
\end{thm}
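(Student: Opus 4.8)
The plan is to show that the corresponding element $w_\lambda$ of a maximal partition $\lambda = (\lambda_1, \ldots, \lambda_m)$ achieves the maximal length in its conjugacy class by producing, for each cycle $w_i$, an explicit decomposition of $w_i$ as a product of involutions whose inversion sets (restricted to the relevant block of coordinates) are disjoint, and then combining the blocks using Lemma~\ref{useful}. Since conjugacy classes of $\sym(n)$ correspond to cycle types, and the length of an element is its number of inversions, an upper bound for the length of any element in the class of cycle type $\lambda$ can be computed combinatorially; the task is to show $w_\lambda$ meets it. The key observation is that the ``stair'' sequence $a_1, a_2, \ldots$ alternately picks the smallest and largest remaining values, so that a single cycle on a stair-consecutive block $[a_k, a_{k+1}, \ldots, a_{k+\lambda_i - 1}]$ is, in an appropriate sense, ``as long as a $\lambda_i$-cycle on that block can be''.

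First I would handle a single cycle. Fix a block corresponding to $w_i$, supported on an interval $[a,b]$ of integers (after relabelling, the stair pattern within the block realises a specific $\lambda_i$-cycle on $\{a, a+1, \ldots, b\}$). I would write down explicitly the number $N(w_i)$ of inversions of this cycle and verify by direct count that it equals the maximum number of inversions over all $\lambda_i$-cycles on a $\lambda_i$-element set — this is where the distinction between even and odd $\lambda_i$ enters: a $k$-cycle on $k$ points has at most $\binom{k}{2}$ inversions if $k$ is even but strictly fewer if $k$ is odd (a $k$-cycle, being an even or odd permutation according to the parity of $k-1$, cannot equal the longest element $w_0$ of $\sym(k)$ when $k$ is odd, since $w_0$ has the parity of $\binom{k}{2}$). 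Simultaneously I would exhibit $w_i = \sigma_i \tau_i$ with $\sigma_i, \tau_i$ involutions and $N(\sigma_i) \cap N(\tau_i) = \emptyset$, so that by \eqref{leneq} the excess of $w_i$ within $\sym([a,b])$ is zero; the standard ``fold in half'' decomposition of a cycle into two reflections does this, and the stair form is exactly engineered so these reflections have disjoint inversion sets.

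Next I would assemble the blocks. Because the supports of the cycles $w_1, \ldots, w_m$ partition $\{1, \ldots, n\}$ into disjoint sets (the blocks of the stair sequence), the involutions $\sigma_1, \tau_1, \ldots, \sigma_m, \tau_m$ pairwise commute across distinct blocks, and indeed each fixes the inversion set of every other (since distinct blocks involve disjoint coordinate sets, and a root $e_p - e_q$ with $p, q$ in one block is untouched by a permutation supported on another block — here one must note the blocks are ``interval-like'' in the stair order so that no root straddles two blocks in a way that breaks the hypothesis of Lemma~\ref{useful}). Then Lemma~\ref{useful} applied to $\sigma_1, \tau_1, \ldots, \sigma_m, \tau_m$ gives $\ell(w_\lambda) = \sum_{i=1}^m \ell(\sigma_i) + \ell(\tau_i) = \sum_{i=1}^m \ell(w_i)$, the last equality by the block-wise excess-zero property. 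Comparing with the upper bound: any $v$ in the conjugacy class of $\lambda$ is a product of disjoint cycles of lengths $\lambda_1, \ldots, \lambda_m$, so $\ell(v) = |N(v)| \leq \sum_i (\text{max inversions of a } \lambda_i\text{-cycle})$, plus contributions from roots straddling different cycles of $v$ — here I would argue that those cross terms are also maximised by $w_\lambda$ because the stair arrangement interleaves small and large values optimally, or more cleanly, bound $\ell(v)$ directly and check equality holds for $w_\lambda$.

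The main obstacle I anticipate is precisely this cross-block bookkeeping: it is not enough that each cycle is individually as long as possible; one must show the stair interleaving of the blocks is globally optimal, i.e. that no rearrangement of which values go in which cycle increases the total number of inversions. I expect the cleanest route is to compute $\ell(w_\lambda)$ in closed form in terms of $\lambda$ (summing the within-block and between-block inversion counts, the latter being where the alternating $1, n, 2, n-1, \ldots$ pattern pays off), then separately derive a matching upper bound for $\ell(v)$ for arbitrary $v$ in the class by an exchange/rearrangement argument on the values assigned to the cycles — showing any local swap that fails to follow the stair pattern does not increase inversions. This is essentially the content of Kim's argument in \cite{kim}, which we may cite; for our purposes the decomposition $w_\lambda = \prod \sigma_i \tau_i$ with disjoint inversion sets is the part we genuinely need, and that part follows from the explicit stair form together with Lemmas~\ref{prelim} and~\ref{useful}.
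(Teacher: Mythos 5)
You should first note that the paper does not prove Theorem \ref{kimthm} at all: it is Kim's result, stated with a citation to \cite{kim}, and it is then used as an ingredient in Proposition \ref{maintypea}. So your closing remark that one ``may cite'' Kim coincides with the paper's actual treatment, and if citation is allowed, that single sentence suffices and the rest of your sketch is not needed. Judged as a self-contained argument, however, the sketch has a genuine gap, and it is exactly the one you flag yourself: the matching upper bound on $\ell(v)$ for an \emph{arbitrary} element $v$ of cycle type $\lambda$ is never established. The exchange/rearrangement argument showing that the stair interleaving is globally optimal is only described as something you expect to work; that argument \emph{is} the content of the theorem. It is also the only place the hypothesis that $\lambda$ is a \emph{maximal} partition (even parts first, odd parts in decreasing order) can enter --- your sketch never uses this hypothesis, and since the conclusion fails for general orderings of the parts, no argument that ignores it can be complete.

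Two further problems with the route you outline. First, the decomposition $w_\lambda=\sigma\tau$ into involutions with $N(\sigma)\cap N(\tau)=\emptyset$ proves, via \eqref{leneq}, only that $e(w_\lambda)=0$; it computes $\ell(w_\lambda)$ exactly but says nothing about the lengths of other elements of the class, so it cannot yield maximality. This is precisely the division of labour in the paper: Kim's theorem supplies maximality, and Proposition \ref{maintypea} (which assumes it) supplies excess zero. Second, the supports of the stair cycles are not intervals: for $\lambda=(4,5)$ one has $w_\lambda=(1,9,2,8)(3,7,4,6,5)$, whose first cycle is supported on $\{1,2,8,9\}$, i.e.\ a low interval $X_1$ together with a high interval $Y_1$ in the paper's notation. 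Relabelling each block to make it an interval destroys exactly the cross-block and cross-half inversions $e_i-e_j$ (with $i$ small, $j$ large) that account for most of $N(w_\lambda)$, so a block-local count of ``inversions within a cycle'' does not capture the quantity to be maximised; and in any case a $k$-cycle on $k$ points attains $\binom{k}{2}$ inversions only for $k\le 2$, since the longest element of $\sym(k)$ is a product of $\lfloor k/2\rfloor$ disjoint transpositions, so the claimed within-block maximality statement is itself off for even $k\ge 4$.
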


Given a sequence $b_1, b_2, \ldots b_k$, of distinct elements in $\{1,\ldots, n\}$, we define $g_{b_1, \cdots, b_k}$ to be the permutation that reverses the sequence and fixes all other $c \in \{1,\ldots, n\}$, so that $g(b_i) = b_{k+1-i}$. That is, $$g = (b_1,b_k)(b_2,b_{k-1}) \cdots (b_{\lfloor k/2 \rfloor}, b_{\lceil k/2 \rceil + 1}).$$ In particular, $g_{b_1, \cdots, b_k}$ is an involution.\\

Let $w = (b_1, b_2, \cdots, b_k)$. Define
\begin{align}
\sigma(w) &=
\left\{\begin{array}{ll} g_{b_1, \ldots, b_k} & \text{ if $k$ even}\\ g_{b_2, \ldots, b_k} & \text{ if $k$ odd}
\end{array}\right. \label{sigmadef}\\
\tau(w) &= \left\{\begin{array}{ll} g_{b_1, \ldots, b_{k-1}} & \text{ if $k$ even}\\ g_{b_1, \ldots, b_k} & \text{ if $k$ odd}
\end{array}\right. \label{taudef}
\end{align}

\begin{lemma}\label{2.3}
Let $w$ be a cycle of $\sym(n)$. Then writing $\sigma = \sigma(w)$ and $\tau = \tau(w)$ we have that $w = \sigma \tau$, where $\sigma$ and $\tau$ are both involutions.
\end{lemma}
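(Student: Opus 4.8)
The plan is to verify directly that the product $\sigma\tau$ equals $w$ by tracking where each point $b_j$ is sent, handling the two parity cases separately. Recall the convention that group elements act on the left, so in the product $\sigma\tau$ we first apply $\tau$ and then $\sigma$.

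First suppose $k$ is even. Then $\sigma = g_{b_1,\ldots,b_k}$ sends $b_i \mapsto b_{k+1-i}$, while $\tau = g_{b_1,\ldots,b_{k-1}}$ sends $b_i \mapsto b_{k-i}$ for $1 \le i \le k-1$ and fixes $b_k$. Now compute $\sigma\tau$ on each $b_j$. For $j = k$: $\tau$ fixes $b_k$, then $\sigma$ sends $b_k \mapsto b_1$, matching $w(b_k) = b_1$. For $1 \le j \le k-1$: $\tau$ sends $b_j \mapsto b_{k-j}$; if $k - j \ge 1$, which always holds, we then apply $\sigma$ to $b_{k-j}$, obtaining $b_{k+1-(k-j)} = b_{j+1}$, matching $w(b_j) = b_{j+1}$. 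Since all other points are fixed by both $\sigma$ and $\tau$, we conclude $\sigma\tau = w$.

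Now suppose $k$ is odd. Then $\sigma = g_{b_2,\ldots,b_k}$ sends $b_i \mapsto b_{k+2-i}$ for $2 \le i \le k$ and fixes $b_1$, while $\tau = g_{b_1,\ldots,b_k}$ sends $b_i \mapsto b_{k+1-i}$ for $1 \le i \le k$. Compute $\sigma\tau$ on each $b_j$. For $j = k$: $\tau$ sends $b_k \mapsto b_1$, then $\sigma$ fixes $b_1$, matching $w(b_k) = b_1$. For $1 \le j \le k-1$: $\tau$ sends $b_j \mapsto b_{k+1-j}$, and since $1 \le j \le k-1$ we have $2 \le k+1-j \le k$, so $\sigma$ applies, sending $b_{k+1-j} \mapsto b_{k+2-(k+1-j)} = b_{j+1}$, matching $w(b_j) = b_{j+1}$. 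Again all other points are fixed, so $\sigma\tau = w$. Finally, $\sigma$ and $\tau$ are involutions because, as noted just before the statement, every element of the form $g_{c_1,\ldots,c_\ell}$ is an involution.

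The proof is entirely a bookkeeping exercise, and the only point requiring a little care is the off-by-one in the index arithmetic together with keeping straight the left-action convention (so that $\sigma\tau$ means ``$\tau$ first''); there is no real obstacle. One could streamline the presentation by observing that $g_{b_1,\ldots,b_k}$ and $g_{b_1,\ldots,b_{k-1}}$ differ by whether $b_k$ is moved, and similarly in the odd case, so that the composite is forced to act as a single $k$-cycle; but the direct point-chase above is the cleanest route to a fully rigorous check.
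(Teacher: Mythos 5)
Your proof is correct and follows essentially the same route as the paper's: a direct point-by-point computation of $\sigma\tau(b_j)$, split into the even and odd cases, using the index formulas for the reversing involutions $g$. The only difference is presentational (you make explicit that points outside the support are fixed), so there is nothing to add.
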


\begin{proof} It is clear that $\sigma$ and $\tau$ are involutions. Let $w = (b_1, \cdots, b_k)$. If $k$ is even, then by \eqref{sigmadef} and \eqref{taudef} we see that for $i \leq k-1$ we have $\sigma\tau(b_i) = \sigma(b_{k-i}) = b_{(k+1) - (k-i)} = b_{i+1}$, and $\sigma\tau(b_k) = \sigma(b_k) = b_1$. Therefore $w = \sigma\tau$. If $k$ is odd then $\sigma(b_j) = b_{k+2-j}$ when $2\leq j \leq k$, and $\sigma(b_1) = b_1$. Therefore, when $i \leq k-1$ we have $\sigma\tau(b_i) = \sigma(b_{k+1-i}) = b_{k+2-(k+1-i)} = b_{i+1}$ and $\sigma\tau(b_k) = \sigma(b_1) = b_1$. Again we get $w = \sigma\tau$.
\end{proof}

Before we go further we introduce some additional notation. Any partition $\lambda$ (via its corresponding element $w_{\lambda}$) induces a partition $X = (X_1, \ldots, X_m)$ of $\{1,\ldots, \lceil\frac{n}{2}\rceil\}$ and a partition $Y = (Y_1, \ldots, Y_m)$ of $\{\lceil\frac{n}{2}\rceil + 1, \ldots, n\}$ by setting \begin{align*}
X_k &= \{1,\ldots, \lceil\textstyle\frac{n}{2}\rceil\} \cap \supp(w_k);\\
Y_k &= \{\lceil\textstyle\frac{n}{2}\rceil + 1, \ldots, n\} \cap \supp(w_k).\end{align*} By definition of $w_k$ we see that $X_k$ is an interval $[\underline x_k, \overline x_k]$ where $\underline x_k$ and $\overline x_k$ are, respectively, the minimal and maximal elements of $X_k$ appearing in $\supp(w_k)$. Similarly we may write $Y_k = [\underline y_k, \overline y_k]$ for appropriate $\underline y_k$ and $\overline y_k$. For example, if $\lambda = (8,5)$, then $w_{\lambda}$ is $(1,13,2,12,3,11,4,10)(5,9,6,8,7)$ and we have $X_1 = \{1,2,3,4\} = [1,4]$, $X_2 = \{5,6,7\}$, $Y_1 = \{10,11,12,13\}$, $Y_2 = \{8,9\}$. Note also that $\sigma(w_1) = (1,10)(2,11)(3,12)(4,13)$, $\sigma(w_2) = (6,8)(7,9)$, $\tau(w_1) = (1,4)(2,3)(11,13)$ and $\tau(w_2) = (5,7)(8,9)$. We will see that $\tau(w_k)$ leaves the sets $X_k$ and $Y_k$ invariant, and $\sigma(w_k)$ interchanges, in an order-preserving way, nearly all, if not all, elements of $X_k$ and $Y_k$.

\begin{prop}
\label{maintypea} Let $\lambda$ be a maximal partition of $n$ and let $C$ be the corresponding conjugacy class of $\sym(n)$. The corresponding element $w_{\lambda}$ has maximal length in $C$, and $e(w_{\lambda}) = 0$.
\end{prop}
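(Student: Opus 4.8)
Since $w_\lambda$ has maximal length in $C$ by Theorem~\ref{kimthm}, the substance of the proposition is the assertion $e(w_\lambda)=0$. Following the strategy described after Equation~\eqref{leneq}, I would exhibit involutions $\sigma$ and $\tau$ with $\sigma\tau=w_\lambda$ and $N(\sigma)\cap N(\tau)=\emptyset$; as $\tau^{-1}=\tau$, Equation~\eqref{leneq} then gives $\ell(w_\lambda)=\ell(\sigma)+\ell(\tau)$, so $e(w_\lambda)=0$. The candidates are $\sigma=\sigma(w_1)\cdots\sigma(w_m)$ and $\tau=\tau(w_1)\cdots\tau(w_m)$ with $\sigma(w_k),\tau(w_k)$ as in \eqref{sigmadef} and \eqref{taudef}. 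Since the cycles $w_k$ have pairwise disjoint supports and $\supp(\sigma(w_k))\cup\supp(\tau(w_k))\subseteq\supp(w_k)$, any two of the factors $\sigma(w_i),\tau(w_j)$ with $i\ne j$ commute; hence $\sigma$ and $\tau$ are involutions, and rearranging and applying Lemma~\ref{2.3} gives $\sigma\tau=\prod_k\sigma(w_k)\tau(w_k)=\prod_k w_k=w_\lambda$.

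The heart of the argument consists of two claims about the stair form. Write $w_k=(b_1,\ldots,b_{\lambda_k})$ with $b_j=a_{\lambda_1+\cdots+\lambda_{k-1}+j}$. Inspection of the sequence $a_1,a_2,\ldots$ shows that the entries of $w_k$ lying in $\{1,\ldots,\lceil n/2\rceil\}$ (the ``low'' entries, which form $X_k$) appear in increasing order as one reads $b_1,\ldots,b_{\lambda_k}$, whereas the ``high'' entries (forming $Y_k$) appear in decreasing order; moreover $X_1<X_2<\cdots$ and $Y_m<\cdots<Y_1$ are consecutive intervals partitioning $\{1,\ldots,n\}$, and whether $b_j$ is low or high is governed by the parity of $j$. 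Now $\tau(w_k)$ is the reversal of a run of consecutive $b_j$'s of \emph{odd} length ($b_1,\ldots,b_{\lambda_k-1}$ if $\lambda_k$ is even, and $b_1,\ldots,b_{\lambda_k}$ if $\lambda_k$ is odd); such a reversal matches positions of equal parity, hence maps low entries to low entries and high entries to high entries, giving claim~(i): $\tau(w_k)$ stabilises both $X_k$ and $Y_k$. By contrast $\sigma(w_k)$ is the reversal of a run of \emph{even} length ($b_1,\ldots,b_{\lambda_k}$, respectively $b_2,\ldots,b_{\lambda_k}$), which matches positions of opposite parity and so interchanges low and high entries; combined with the monotonicity of the low and high entries this yields claim~(ii): $\sigma(w_k)$ is strictly increasing on $X_k$ and strictly increasing on $Y_k$. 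In checking~(ii) one must also confirm that when $\lambda_k$ is odd the unique entry $b_1$ fixed by $\sigma(w_k)$ is the extreme element of its block ($\min X_k$ or $\max Y_k$, according to the parity of $\lambda_1+\cdots+\lambda_{k-1}$), so that fixing it does not violate monotonicity.

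Granting (i) and (ii), the conclusion is formal. Since $\sigma(w_j)$ and $\tau(w_j)$ with $j\ne k$ fix $X_k\cup Y_k$ pointwise, the restrictions of $\sigma$ and of $\tau$ to $X_k$ (resp.\ to $Y_k$) agree with those of $\sigma(w_k)$ and $\tau(w_k)$. Hence $\tau$ permutes within each of the intervals $X_1,\ldots,X_m,Y_1,\ldots,Y_m$; as these intervals partition $\{1,\ldots,n\}$, every root of $N(\tau)$ is of the form $e_a-e_b$ with $a,b$ in a common block — indeed Lemma~\ref{useful} applied to the $\tau(w_k)$ gives $N(\tau)=\dot\cup_k N(\tau(w_k))$ with $N(\tau(w_k))\subseteq\Phi^+_{X_k}\cup\Phi^+_{Y_k}$. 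On the other hand, by~(ii) $\sigma$ is strictly increasing on every block $X_k$ and $Y_k$, so $N(\sigma)$ contains no root $e_a-e_b$ with $a,b$ in a common block. Therefore $N(\sigma)\cap N(\tau)=\emptyset$, and $e(w_\lambda)=0$ as explained. The real work — and the main obstacle — is step~(ii): a careful parity-and-ordering bookkeeping of the stair form, in particular verifying that the lone fixed point of $\sigma(w_k)$ always sits at the correct end of its interval. The explicit computation of $\sigma(w_k),\tau(w_k)$ for $\lambda=(8,5)$ given just before the proposition is a reliable model for this.
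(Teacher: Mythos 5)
Your proposal is correct and follows essentially the same route as the paper: the same factors $\sigma(w_k),\tau(w_k)$, the same block structure $X_k,Y_k$, and the same appeal to Lemmas~\ref{2.3} and~\ref{useful} together with Equation~\eqref{leneq}. The only difference is cosmetic: where the paper writes out $N(\sigma_k)$ explicitly (its Equation~\eqref{nsigk}) and compares it with $N(\tau_k)$, you deduce disjointness from the observation that $\sigma$ is order-preserving on each block while $N(\tau)$ consists only of within-block roots, which amounts to the same verification.
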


\begin{proof} Write $\lambda = (\lambda_1, \ldots, \lambda_m)$.
Set $w = w_{\lambda} = w_1\cdots w_m$ where $w_i$ is as given in Definition \ref{defkim}. By Theorem \ref{kimthm} $w$ has maximal length in $C$. For each $i$ set $\sigma_i = \sigma(w_i)$ and $\tau_i = \tau(w_i)$. Since the supports (in other words the sets of non-fixed points) of $\sigma$ and $\tau$ are subsets of the support of $w_i$, it is clear that both $\sigma_i$ and $\tau_i$ commute with both $\sigma_j$ and $\tau_j$ whenever $i \neq j$. Hence $\sigma = \sigma_1\cdots \sigma_m$ and $\tau = \tau_1\cdots \tau_m$ are involutions with the property that, by Lemma \ref{2.3}, $\sigma\tau = w$.
We must show that $N(\sigma) \cap N(\tau) = \emptyset$. This will imply by Equation~\eqref{leneq} that $e(w) = 0$. \\

Consider the cycle $w_k$ of $w$. Then $w_k = (a_{L+1}, a_{L+2}, \ldots, a_{L+\lambda_k})$ (setting $L = \sum_{j=1}^{k-1}\lambda_j$). This means, depending on the parity of $L$, that $w_k = (i, n+1-i, i+1, n-i, \ldots)$ or $(n+2-i, i, n+1-i, \ldots)$  for some $i \geq 1$. The support of $w_k$ is $X_k \cup Y_k$.\\

Let us consider $\tau_k = \tau(w_k)$. Now if $\lambda_k$ is even, we have $\tau_k = \prod_{i=1}^{(\lambda_k/2-1)}(a_{L+i},a_{L + \lambda_k-i})$. If $\lambda_k$ is odd then $\tau_k = \prod_{i=1}^{\lfloor\lambda_k/2\rfloor}(a_{L+i},a_{L + \lambda_k + 1 -i})$. In both cases $\tau_k$ is mapping odd terms of the sequence $(a_i)$ to odd terms and even terms to even terms. In particular, $\tau_k \in \sym(X_k) \times \sym(Y_k)$. Therefore \begin{equation}
\label{ntauk}N(\tau_k) \subseteq \{e_i - e_j: \underline{x}_k \leq i < j\leq \overline{x}_k \} \cup \{e_i - e_j: \underline{y}_k \leq i < j\leq \overline{y}_k\}.
\end{equation} (If $\lambda_k$ is odd then we have equality here and $\tau_k = g_{\underline{x}_k,\ldots, \overline{x}_k}g_{\underline{y}_k,\ldots, \overline{y}_k}$.)  \\

Next we look at $\sigma_k$. If $\lambda_k$ is even, then setting $\mu = \lfloor \frac{\lambda_k}{2}\rfloor$ we have
$$\sigma_k = \prod_{i=1}^{\mu}(a_{L+i},a_{L + \lambda_k + 1 -i}).$$ If $\lambda_k$ is odd then $\sigma_k = \prod_{i=2}^{\mu + 1}(a_{L+i},a_{L + \lambda_k + 2 -i})$. What happens this time is that $\sigma_k$ is the order preserving bijection between the highest $\mu$ elements of $X_k$ and the lowest $\mu$ elements of $Y_k$.
Therefore \begin{align}
 N(\sigma_k) &= \{e_i - e_j : \underline x_k \leq \overline x_k + 1 - \mu \leq i \leq \overline x_k < j < \underline y_k\} \nonumber \\ & \cup \{e_i - e_j : \overline x_k < i < \underline y_k \leq j \leq \underline y_k + \mu - 1 \leq \overline y_k\}.\label{nsigk}
\end{align}
Now for $\ell \neq k$, we have that $\sigma_\ell$ fixes all $i$ for $i \notin X_\ell \cup Y_\ell$ and interchanges various elements of $X_\ell$ and $Y_\ell$. Therefore $\sigma_\ell(N(\sigma_k)) = N(\sigma_k)$. So we may apply Lemma \ref{useful} to conclude that $N(\sigma) = \dot{\cup}_{k=1}^mN(\sigma_k)$. Similarly since $\tau_\ell$ fixes all $i$ for $i \notin X_\ell \cup Y_\ell$, we can deduce that $N(\tau) = \dot{\cup}_{k=1}^mN(\tau_k)$. Looking at Equations \eqref{ntauk} and \eqref{nsigk} it is clear that $N(\tau) \cap N(\sigma) = \emptyset$. Therefore by Equation \eqref{leneq} we see that $\ell(w) = \ell(\sigma) + \ell(\tau)$ and hence $e(w) = 0$, as required.
\end{proof}

\section{Maximal lengths in types $B_n$ and $D_n$}\label{bdl}

Throughout this section, $W$ is assumed to be a Coxeter group of
type $B_n$ containing $\hat W$, the canonical index 2 subgroup of
type $D_n$. We will view elements of $W$ as signed cycles.
A cycle is called negative if it has an odd number of minus signs above its entries, and positive otherwise.
The conjugacy classes of $W$ are parameterized by
signed cycle type. So for $X$ a subset of a conjugacy class of
$W$, this data may be encoded by
$$\lambda(X) = (\lambda_1, \ldots, \lambda_{\nu_X}; \lambda_{\nu_X
+ 1}, \ldots, \lambda_{z_X})$$ %
where in this expression $\nu_X$ is the number of negative cycles, $z_X$ is the total
number of cycles, and $\lambda_1 \leq \ldots \leq
\lambda_{\nu_X}$, respectively $\lambda_{\nu_X + 1} \leq \cdots
\leq \lambda_{z_X}$, are the lengths of the negative, respectively
positive, cycles of $X$. So any element of $X$ has $\lambda(X)$ as
its signed cycle type.
Our main aim in this section is to prove Theorem \ref{thm2} and the following.

\begin{thm} \label{thm1} Suppose $\hat W$ is a Coxeter group of type
$D_n$, and let $\hat C$ be a conjugacy class of $\hat W$. Set $C = \hat C_0 = 
\hat Cw_0$, where $w_0$ is the longest element of $W$, and assume
that \linebreak $\lambda(C) = (\lambda_1, \ldots, \lambda_{\nu_C}; \lambda_{\nu
+ 1}, \ldots, \lambda_{z_C})$. Then the maximal length in $\hat C$
is \vspace*{-5mm}
$$n^2 + z_C - 2\sum_{i=1}^{\nu_C-1} (\nu_C - i)\lambda_i.$$\end{thm}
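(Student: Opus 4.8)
The plan is to reduce the computation of the maximal length in $\hat C$ to a computation of the \emph{minimal} $B$-length in the class $C = \hat C w_0$, using the general machinery of Theorem \ref{maxl} together with the standard relation $\ell(w) + \ell(ww_0) = \ell(w_0) = n^2$ valid in a Coxeter group of type $B_n$. Concretely, if $w$ runs over $\hat C$, then $ww_0$ runs over $C$, and $\ell(w) = n^2 - \ell(ww_0)$; hence maximal $B$-length in $\hat C$ equals $n^2$ minus the minimal $B$-length in $C$. The subtlety, flagged already in the introduction, is that when $n$ is odd the set $C$ need not lie inside $\hat W$, so we must genuinely work with $B$-length on all of $W$ and only afterwards translate back to $D$-length on $\hat W$; this is precisely what Theorem \ref{maxl} is set up to handle, and I would invoke it to pass between $D$-length in $\hat C$ and $B$-length data for $C$.

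The next step is to identify, for each conjugacy class $C$ of $W$ with signed cycle type $\lambda(C) = (\lambda_1, \ldots, \lambda_{\nu_C}; \lambda_{\nu_C+1}, \ldots, \lambda_{z_C})$, its minimal $B$-length. I would use the known formula for minimal length of elements in $W$-conjugacy classes (the representatives of Theorems 3.4.7 and 3.4.12 of \cite{gkpf}, whose minimality also follows from Corollaries \ref{cor1} and \ref{cor2} in this paper): for a signed cycle type, the minimal $B$-length of the class is obtained from a representative built out of ``standard'' short positive cycles and a single sign-carrying negative block per negative cycle, and it evaluates to a sum over the parts in which the negative parts $\lambda_1 \le \cdots \le \lambda_{\nu_C}$ contribute an extra ``staircase'' term. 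I expect the minimal $B$-length of $C$ to come out as something of the shape $\bigl(\sum_i \lambda_i\bigr) - z_C + 2\sum_{i=1}^{\nu_C - 1}(\nu_C - i)\lambda_i$ — note $\sum_i \lambda_i = n$ — and subtracting this from $n^2$ then yields exactly $n^2 + z_C - 2\sum_{i=1}^{\nu_C-1}(\nu_C-i)\lambda_i$, which is the claimed formula. So the arithmetic reduces to reading off the minimal-length formula correctly and doing the subtraction.

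In more detail, the key steps in order are: (1) record $\ell_B(w) + \ell_B(ww_0) = n^2$ and deduce that $\max_{\hat C} \ell = n^2 - \min_C \ell_B$; (2) apply Theorem \ref{maxl} to justify that the maximal $D$-length in $\hat C$ agrees with the maximal $B$-length in $\hat C$ for the classes in question (this is where the split-class / parity bookkeeping in type $D_n$ enters, and is the one place where one must be careful that passing to $\hat W$ does not lose length); (3) write down the minimal-$B$-length representative of $C$ from its signed cycle type, compute its number of negative roots taken negative, and simplify the resulting sum into the staircase expression; (4) substitute into (1). The main obstacle is step (2)–(3): ensuring that the minimal-length formula for $C$ is applied with the correct normalisation of signed cycle type (the ordering $\lambda_1 \le \cdots \le \lambda_{\nu_C}$ is what makes the staircase coefficients $(\nu_C - i)$ come out right, and reversing the order would change the contribution), and checking that the $D_n$-versus-$B_n$ length discrepancy contributes nothing beyond what Theorem \ref{maxl} already accounts for. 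Once the minimal-$B$-length of $C$ is pinned down as $n - z_C + 2\sum_{i=1}^{\nu_C-1}(\nu_C-i)\lambda_i$, the theorem is immediate.
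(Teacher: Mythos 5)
Your overall plan — pass from $\hat C$ to $C=\hat C w_0$, convert maximal length into minimal length via $w_0$, and feed in the data for the representative $u_C$ — is exactly the paper's route (its proof is literally Theorem \ref{maxl} combined with Lemma \ref{uc}), but the execution has genuine gaps. First, your step (2) is false as stated: for $w\in\hat W$ the $B$-length and $D$-length differ by $|\Sigma(w)|$, the number of short positive roots sent negative, and for the relevant maximal elements $u_Cw_0$ this is $n-\nu_C$, not $0$; so the maximal $D$-length in $\hat C$ does not coincide with the maximal $B$-length there, and the identity $\ell_B(w)+\ell_B(ww_0)=n^2$ of step (1) cannot by itself produce the length in $\hat W$. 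The correct duality is the long-root one: $\ell_D(w)=|\Lambda(w)|$ and $|\Lambda(ww_0)|=(n^2-n)-|\Lambda(w)|$, so the maximal $D$-length in $\hat C$ is $(n^2-n)$ minus the minimum of $|\Lambda|$ over $C$, which is what Lemma \ref{lmin} and Lemma \ref{uc} supply, namely $|\Lambda(u_C)|=n-z_C+2\sum_{i=1}^{\nu_C-1}(\nu_C-i)\lambda_i$. Relatedly, the quantity you quote as the "minimal $B$-length of $C$" is not the $B$-length at all: by Corollary \ref{cor1} the minimal $B$-length is $n+\nu_C-z_C+2\sum_{i=1}^{\nu_C-1}(\nu_C-i)\lambda_i$; the expression you wrote (without the $\nu_C$) is the minimal $D$-length of Corollary \ref{cor2}, i.e.\ $|\Lambda(u_C)|$.

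Second, your final arithmetic does not close: $n^2-\bigl(n-z_C+2\sum_{i=1}^{\nu_C-1}(\nu_C-i)\lambda_i\bigr)=n^2-n+z_C-2\sum_{i=1}^{\nu_C-1}(\nu_C-i)\lambda_i$, not $n^2+z_C-2\sum_{i=1}^{\nu_C-1}(\nu_C-i)\lambda_i$, so the claimed agreement with the displayed formula is reached only by dropping an $n$. Carrying the computation out correctly, as the paper does, gives $(n^2-n)-|\Lambda(u_C)|=n^2-2n+z_C-2\sum_{i=1}^{\nu_C-1}(\nu_C-i)\lambda_i$; note the displayed formula in the statement itself appears to carry a misprint of $2n$ (test $\hat C=\{1\}$, so $C=\{w_0\}$ with $\nu_C=z_C=n$ and all $\lambda_i=1$: the true maximum is $0$, the corrected expression gives $0$, the displayed one gives $2n$), so "matching" it should have been a warning sign rather than a confirmation. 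Finally, you gesture at but never carry out the split-class step: when the $W$-class meeting $\hat C$ splits into two $\hat W$-classes one must check both halves attain the same maximum, which the paper obtains from the pair of representatives $u_Cw_0$ and $u_C^{t}w_0$ interchanged by the length-preserving graph automorphism. With $\ell_D=|\Lambda|$ used in place of $\ell_B$ throughout and these points repaired, your outline does reduce to the paper's argument.
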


Let $\Phi$ be the root system of $W$. We employ the usual
description of $\Phi$ (as given, for example in \cite{humphreys}).
So the positive long roots are $\droot^+=\{e_i \pm e_j: 1\leq i < j
\leq n\}$, the negative long roots are $\droot^- = -\droot^+$ and
$\droot = \droot^+ \cup \droot^-$. The short roots are $\short^+ =
\{e_i: 1\leq i \leq n\}$, $\short^- = -\short^+$ and $\short =
\short^+ \cup \short^-$. Finally the positive roots are $\Phi^+ =
\droot^+ \cup \short^+$, the negative roots are $\Phi^- = \droot^-
\cup \short^-$ and $\Phi = \Phi^+ \cup \Phi^-$. We note that the
set of positive roots for $\hat W$ is $\droot^+$. We recall our convention will be
that the action of a group element is on the left of the root, so
that for example $(\mi 1 \+ 3 \+
8)(e_1) = (\+ 1 \+ 3 \+ 8)(\mi 1)(e_1) = -e_3$.\\

For $w \in W$, we define the following two sets.
$$\Lambda(w) = \{ \alpha \in \droot^+: w(\alpha) \in \Phi^-\};$$
$$\Sigma(w) = \{\alpha \in \short^+: w(\alpha) \in \Phi^-\}.$$

Set $l_B(w) =  |\Lambda(w)| + |\Sigma(w)|$ and $l_D(w) =
|\Lambda(w)|$. By \cite{humphreys} $l_B(w)$ is the length of $w$
and, should $w \in \hat W$, then $l_D(w)$ is the length of $w$
viewed as an element of $\hat W$. We call $l_B(w)$ the $B$-length of
$w$ and $l_D(w)$ the $D$-length of $w$. Given
$w \in W$, let $\overline{w}$ be the corresponding element of
$\sym(n)$. So, for example, if $w = (\mi 1 \+ 3 \+ 8)$, then
$\overline{w} = (138)$.
Observe that for $w \in W$, by a slight abuse of notation, we can write
$$w = \overline{w}\left(\textstyle\prod_{e_i \in \Sigma(w)} (\mi i)\right).$$
Hence, in our above example, $(\mi 1 \+ 3 \+ 8) = (138)(\mi 1)$.\\

Later when we talk about excess in these groups, to avoid ambiguity we will use the notation $e_B(w)$ to mean the excess $e(w)$ when $w$ is viewed as an element of $W$, and $e_D(w)$ to mean the excess $e(w)$ when $w$ is viewed (where appropriate) as an element of $\hat W$. That is, for all $w$ in $W$ we define \begin{align*}
e_B(w) &=\min\{\ell_B(\sigma) + \ell_B(\tau) - \ell_B(w): \sigma, \tau \in W, w = \sigma\tau, \sigma^2 = \tau^2 = 1\};\\
e_D(w) &=\min\{\ell_D(\sigma) + \ell_D(\tau) - \ell_D(w): \sigma, \tau \in \hat W, w = \sigma\tau, \sigma^2 = \tau^2 = 1\}.
\end{align*}

 As noted earlier, conjugacy classes of $W$ are
parameterized by signed cycle type. So,
for example, if $W$ is of type $B_9$ and $C$ is the $W$-conjugacy
class of $w = (\+{1} \+{2})(\+{3} \mi{4} \+{5})(\mi{6} \+{7}
\+{8})(\+{9})$, then the signed cycle type $\lambda(C)$ of $C$ is $\lambda(C) = (3,3;1,2)$.
 In $\hat W$, conjugacy classes are also parameterized by signed
cycle type, with the exception that there are two classes for each
signed cycle type consisting only of even length, positive cycles.
(The length profiles in each pair of split classes are identical, because the classes are interchanged by the length-preserving graph automorphism.)

\begin{lemma}\label{lmin} Let $C$ be a conjugacy class of $W$, and $w \in
C$. Then $$|\Lambda(w)| \geq n - z_C + 2\sum_{i=1}^{\nu_C-1} (\nu_C -
i)\lambda_i.$$ Moreover $|\Sigma(w)| \geq \nu_C$.\end{lemma}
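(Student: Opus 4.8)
The plan is to establish the two inequalities by analysing how a signed permutation $w$ acts on the long roots $e_i \pm e_j$ and the short roots $e_i$, counting directly the minimum number of such roots that must be sent to $\Phi^-$. The bound on $|\Sigma(w)|$ is the easy half, so I would dispose of it first; the bound on $|\Lambda(w)|$ is where the work lies, and I expect the combinatorics of counting forced ``sign changes'' across the cycles of $w$ to be the main obstacle.

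\textbf{The short-root bound.} For the claim $|\Sigma(w)| \geq \nu_C$, recall $\Sigma(w) = \{e_i : w(e_i) \in \Phi^-\}$. Inside a single negative cycle $c = (\arb{b_1}, \ldots, \arb{b_k})$ the total number of minus signs is odd, and $w(e_{b_j}) = \pm e_{b_{j+1}}$ with the sign determined by the sign attached to $b_j$; hence the number of indices $i$ in the support of $c$ with $w(e_i) \in \short^-$ has the same parity as the number of minus signs in $c$, so it is odd, hence at least $1$. Summing over the $\nu_C$ negative cycles (the positive cycles contribute nothing to the lower bound) gives $|\Sigma(w)| \geq \nu_C$.

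\textbf{The long-root bound.} For $|\Lambda(w)| \geq n - z_C + 2\sum_{i=1}^{\nu_C-1}(\nu_C-i)\lambda_i$, the natural approach is to decompose the count by which cycles of $w$ the two indices $i<j$ of a root $e_i \pm e_j$ belong to. I would treat three contributions: (a) roots with both indices inside one cycle $c$ of length $\ell$; here a cyclic permutation of the $\ell$ support points, decorated with signs, is forced to invert a number of pairs $e_i \pm e_j$ bounded below by roughly $\ell - 1$ (a single $\ell$-cycle in $\sym(n)$ has at least $\ell-1$ inversions, and the sign decorations only add more), with the precise constant depending on the parity/sign-type of $c$; (b) roots with indices in two distinct cycles $c, c'$ where at least one is negative --- here one shows that for \emph{each} pair $\{i \in \supp(c),\, j \in \supp(c')\}$, \emph{both} $e_i + e_j$ and $e_i - e_j$, or \emph{neither}, pattern is impossible, and a parity argument forces at least a certain number of the pairs to be inverted, and it is this cross-cycle term, summed over all pairs of negative cycles, that produces the $2\sum (\nu_C - i)\lambda_i$ expression once the $\lambda_i$ are arranged in increasing order; (c) roots with indices in two cycles both positive contribute a nonnegative amount that can be dropped. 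Assembling (a) over all $z_C$ cycles gives the $n - z_C$ term, and (b) gives the weighted sum; the ordering $\lambda_1 \leq \cdots \leq \lambda_{\nu_C}$ is exactly what makes $\sum_{i}(\nu_C - i)\lambda_i$ the minimum of the relevant cross-cycle cost over all orderings.

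\textbf{Expected obstacle.} The delicate point is making the cross-cycle count in (b) precise: one must show that whenever $c$ is a negative cycle and $c'$ is another cycle, the action of $w$ on the $2|\supp(c)|\,|\supp(c')|$ roots $\{e_i \pm e_j\}$ with $i \in \supp(c)$, $j \in \supp(c')$ sends at least $|\supp(c)|\,|\supp(c')|$ of them to $\Phi^-$ (half of them, forced by a parity/pairing argument: the map $e_i - e_j \mapsto e_i + e_j$ interacts with $w$ so that exactly one of each such pair is negative when signs ``don't match up'', and the negative cycle guarantees a mismatch), and then one sums $\lambda_a \lambda_b$ over pairs $a<b$ of negative cycles, re-indexing to obtain $2\sum_{i=1}^{\nu_C-1}(\nu_C-i)\lambda_i$ after using $\lambda_1 \leq \cdots \leq \lambda_{\nu_C}$. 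Keeping track of the boundary cases (a negative cycle of length $1$, i.e.\ a single $(\mi i)$; cycles of even versus odd length) is the bookkeeping that needs care, but the structure of the argument is a clean case split on cycle membership of root indices together with parity counts within and between cycles.
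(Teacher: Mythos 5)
Your bound $|\Sigma(w)|\ge\nu_C$ is fine, and your within-cycle count in (a) is essentially correct: for $i<j$ in the support of one cycle, the number of roots among $e_i-e_j,\,e_i+e_j$ sent negative is at least the inversion count of $\overline{w}$ on that pair, and an $\ell$-cycle has at least $\ell-1$ inversions (note, though, that the signs change \emph{which} roots are inverted --- $(\mi{1}\,\mi{2})$ inverts $e_1+e_2$ but not $e_1-e_2$ --- so ``only add more'' is true of the count, not of the set, and needs that small case check; the paper instead counts transitions inside $\langle w\rangle$-orbits). The genuine gap is step (b). First, the claimed cross-cycle bound is false: it is not true that a negative cycle $c$ and another cycle $c'$ force at least $|\supp(c)|\,|\supp(c')|$ of the $2|\supp(c)|\,|\supp(c')|$ cross roots into $\Lambda(w)$. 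For $w=(\+{1}\,\+{2}\,\+{3}\,\mi{4})(\+{5}\,\+{6}\,\+{7}\,\mi{8})$, a product of two negative $4$-cycles, the only cross roots sent negative are the eight roots $e_4\pm e_j$ with $j\in\{5,6,7,8\}$, i.e.\ $8$ of the $32$, not $16$. Second, even if that bound held it would not prove the lemma: $\sum_{a<b}\lambda_a\lambda_b$ over pairs of negative cycles does not dominate $2\sum_{i=1}^{\nu_C-1}(\nu_C-i)\lambda_i$. If every negative cycle is a $1$-cycle, say $w=(\mi{1})(\mi{2})\cdots(\mi{\nu_C})$ with $n=\nu_C$, your cross total is $\binom{\nu_C}{2}$ while the lemma demands $2\binom{\nu_C}{2}$ (and indeed all $\nu_C(\nu_C-1)$ long positive roots lie in $\Lambda(w)$ there). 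So the product $\lambda_a\lambda_b$ is the wrong per-pair quantity in both directions: the forced cross contribution is linear, not quadratic, in the cycle lengths, and what the target requires is essentially $2\min(\lambda_a,\lambda_b)$ per pair, since $\sum_{a<b}2\min(\lambda_a,\lambda_b)=2\sum_{i=1}^{\nu_C-1}(\nu_C-i)\lambda_i$ when $\lambda_1\le\cdots\le\lambda_{\nu_C}$. A pairing/parity argument on $\{e_i-e_j,\,e_i+e_j\}$ cannot produce this, because for most pairs $\{i,j\}$ nothing at all is forced (in the example above $24$ of the $32$ cross roots stay positive).

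The idea that repairs (b), and is how the paper argues, is dynamical rather than pairwise: for negative cycles $\sigma_i,\sigma_j$ with $i<j$ (ordered by minimal support element), let $c$ be minimal in $\supp(\overline{\sigma_i})$, $k=|\supp(\overline{\sigma_i})|$, $\ell=|\supp(\overline{\sigma_j})|$, and follow each of the $2\ell$ positive roots $e_c\pm e_b$, $b\in\supp(\overline{\sigma_j})$, under powers of $w$: after $k$ steps such a root arrives at $-e_c\pm e_{b'}$, which is negative precisely because $c$ is minimal, so each of these $2\ell$ pairwise disjoint orbit segments contains a transition from a positive to a negative root. This yields at least $2\ell$ cross roots in $\Lambda(w)$ per pair, hence at least $2\sum_{j=2}^{\nu_C}(j-1)k_j$ in total, and a rearrangement inequality using $\lambda_1\le\cdots\le\lambda_{\nu_C}$ converts this into $2\sum_{i=1}^{\nu_C-1}(\nu_C-i)\lambda_i$. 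Without some replacement of this kind for your step (b), the proposal does not establish the stated lower bound on $|\Lambda(w)|$.
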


\begin{proof} Set $\nu = \nu_C$ and $z = z_C$. Write $w$ as a product of disjoint cycles, $w =
\sigma_1 \sigma_2 \cdots \sigma_z$, where $\sigma_1, \ldots,
\sigma_{\nu}$ are negative cycles and the remaining cycles are
positive. Also, order the negative cycles such that $i < j$ if and
only if the minimal element in $\supp(\overline{\sigma_i})$ is
smaller than the minimal element in $\supp(\overline{\sigma_j})$. Our
approach is to consider certain $\langle w \rangle$-orbits of
roots.\smallskip\\

Firstly, let $\sigma$ be a positive $k$-cycle of $w$ and consider
the orbits consisting of roots of the form $e_a - e_b$, for $a, b
\in \supp(\overline{\sigma})$ and $a \neq b$. Each such orbit has length $k$. There
are $2\binom{k}{2}$ roots of this form, and hence $k-1$ such orbits.
Let $c$ be the maximal element in $\supp(\overline{\sigma})$. Then each orbit contains
both $e_a - e_c$ and $e_c - e_b$ for some $a, b \in \supp(\overline{\sigma})$. Now
$e_a - a_c \in \Phi^+$ and $e_c - e_b \in \Phi^-$. Therefore each
orbit includes a transition from positive to negative (that is, a
positive root $\alpha$ for which $w(\alpha)$ is negative). Hence
each orbit contributes at least one root to $\Lambda(w)$. Therefore
each positive $k$-cycle
contributes at least $k-1$ roots to $\Lambda(w)$.\\

Next suppose $\sigma$ is a negative $k$-cycle of $w$. This time we
consider orbits consisting of roots of the form $\pm e_a \pm e_b$,
for $a, b \in \supp(\overline{\sigma})$ and $a \neq b$. Each such orbit has length
$2k$. There are $4\binom{k}{2}$ roots of this form, and hence
$k-1$ such orbits. Moreover if $\alpha$ lies in one of these
orbits, then $-\alpha$ lies in the same orbit. Thus again each
orbit includes a transition from positive to negative and hence
contributes at least one root to $\Lambda(w)$. Therefore each
negative $k$-cycle contributes at least $k-1$ roots to $\Lambda(w)$.\\

Now suppose $\sigma_i$ and $\sigma_j$ are negative cycles, with $i
< j$, and consider the union of all orbits consisting of roots of
the form $\pm e_a \pm e_b$, where $a \in \supp(\overline{\sigma_i})$ and $b \in
\supp(\overline{\sigma_j})$. Suppose $|\supp(\overline{\sigma_i})| = k$ and $|\supp(\overline{\sigma_j})| = \ell$. Let $c$
be minimal in $\supp(\overline{\sigma_i})$. Then every orbit contains some $\pm e_c
\pm e_b$ for some $b \in \supp(\overline{\sigma_j})$. For every root of the form
$e_c \pm e_b$, we have $w^k(e_c \pm e_b) = -e_c \pm e_{b'}$ and
$w^{2k}(e_c\pm e_{b'}) = e_c \pm e_{b''}$ for some $b', b'' \in
\supp(\overline{\sigma_j})$. Now $e_c \pm e_b$ and $e_c \pm e_{b''}$ are positive
roots, but $-e_c \pm e_{b'}$ is negative. Therefore in this orbit
or part of orbit there is at least one transition from positive to
negative. There are $2\ell$ roots of the form $e_c \pm e_b$, and
hence each pair $\sigma_i$, $\sigma_j$ of negative cycles with $i
< j$ contributes at least $2|\supp(\overline{\sigma_j})|$ roots to $\Lambda(w)$. For
example, letting $i$ range from 1 to $\nu-1$, we get a total of
$(\nu-1) \times 2|\supp(\overline{\sigma_{\nu}})|$ roots from pairs $\sigma_i$
and $\sigma_{\nu}$.\\

Combining these three observations and writing $k_i$ for $|\supp(\overline{\sigma_i})|$, we see that%
$$\Lambda(w) \geq \sum_{i=1}^z (k_i-1) + 2\sum_{i=2}^{\nu} (i-1)k_i.$$

Since $\{k_1, \ldots, k_{\nu}\} = \{\lambda_1, \ldots,
\lambda_{\nu}\}$, and $\lambda_1 \leq \lambda_2 \leq \cdots \leq
\lambda_{\nu}$, it is clear that

\begin{align*} \sum_{i=2}^{\nu} (i-1)k_i &= k_2 + 2k_3 + \cdots + (\nu-1)k_\nu \\ 
&\geq \lambda_{\nu-1} + 2\lambda_{\nu-2} + \cdots + (\nu - 1)\lambda_1 \\ &= \sum_{i=1}^{\nu-1} (\nu-i)\lambda_i.\end{align*}

Therefore $$|\Lambda(w)| \geq n - z + 2\sum_{i=1}^{\nu-1} (\nu -
i)\lambda_i.$$

It only remains to show that $|\Sigma(w)| \geq \nu$. This
trivially follows from the fact that there are $\nu$ negative
cycles and each negative cycle must contain at least one minus
sign. Therefore there are at least $\nu$ roots $e_a$ for which
$w(e_a) \in \Phi^-$. Thus $|\Sigma(w)| \geq \nu$ and the proof of
the lemma is complete.\end{proof}

Next, given a conjugacy class $C$ of $W$ we define a particular
element $u_C$ of $C$ (which will turn out to have minimal
$B$-length). Recall that the signed cycle type of $C$ is
$$\lambda(C) = (\lambda_1, \lambda_2, \ldots, \lambda_{\nu_C};
\lambda_{{\nu_C}+1}, \ldots, \lambda_{z_C}),$$ and write $\mu_i = n
- \sum_{j = 1}^{i}\lambda_j$ for $1 \leq i < z_C$. Set $\nu = \nu_C$
and $z = z_C$. Then define $u_C$ to be the following element of $C$.
\begin{align*}u_C = &
(\+{1}, \+{2}, \ldots, \+{\lambda_z})(\+{\mu_{z-1} + 1}, \ldots, \+{\mu_{z-2}})\cdots (\+{\mu_{\nu + 1}+1}, \+{\mu_{\nu+1} + 2},\ldots, \+{\mu_{\nu}})\cdot\\
&\cdot (\+{\mu_{\nu}+1}, \+{\mu_{\nu} + 2,}\ldots, \+{\mu_{\nu-1}-1},\mi{\mu_{\nu-1}})\cdots (\+{\mu_{1}+1}, \ldots, \+{n-1}, \mi n)
\end{align*}%

As an example, let $w = (\mi{1} \+{7} \mi{2} \mi{9})(\mi{3} \+{4}
\mi{6})(\+{5} \mi{8})$ and let $C$ be the conjugacy class of $w$ in
type $B_9$. Then $\lambda_C = (2,4;3)$, ${\nu_C}=2$, $z_C = 3$,
$\mu_1 = 7$ and $\mu_2 = 3$. This gives $u_C = (\+{1} \+{2} \+{3})(\+{4}\+{5} \+{6} \mi{7})(\+{8} \mi{9})$.

\begin{lemma} \label{uc} Suppose $w = u_C$ for some conjugacy class $C$ of $W$. Then $|\Sigma(w)| = \nu_C$ and
$|\Lambda(w)| = n - z_C + 2\sum_{i=1}^{\nu_C-1} (\nu_C -
i)\lambda_i$\end{lemma}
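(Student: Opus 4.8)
The plan is to compute $|\Sigma(u_C)|$ and $|\Lambda(u_C)|$ directly from the explicit cycle structure of $u_C$, and then argue that the lower bounds of Lemma~\ref{lmin} are in fact equalities for this particular element. Since $u_C$ has exactly $\nu_C$ negative cycles, each containing exactly one sign (a single $\mi{}$ at the top entry), the positive short roots $e_a$ sent to $\Phi^-$ are precisely those $e_a$ with $a$ the final entry of one of the $\nu$ negative cycles; so $|\Sigma(u_C)| = \nu_C$ is immediate. The substance is the claim about $|\Lambda(u_C)|$, and I would prove it by showing that each of the three ``orbit'' contributions identified in the proof of Lemma~\ref{lmin} contributes \emph{exactly} its stated minimum when $w = u_C$.

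Concretely, I would decompose $\droot^+$ into $\langle u_C\rangle$-orbits exactly as in Lemma~\ref{lmin}: orbits of roots $e_a - e_b$ with $a,b$ in the support of a single positive cycle (there are $k-1$ such orbits for a $k$-cycle, each meeting $\Lambda$); orbits of roots $\pm e_a \pm e_b$ with $a,b$ in the support of a single negative cycle ($k-1$ orbits, each meeting $\Lambda$); orbits of roots $\pm e_a \pm e_b$ with $a$ in cycle $\sigma_i$ and $b$ in cycle $\sigma_j$ for $i<j$; and orbits of $e_a\pm e_b$ with $a$ in a negative cycle and $b$ in a positive cycle, and $a\pm b$ with $a,b$ in distinct positive cycles. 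Because the cycles of $u_C$ act as cyclic rotations on consecutive integer intervals (with the sole sign sitting on the top element of each negative cycle), one can track sign changes explicitly. For the intra-cycle positive case and the intra-cycle negative case, a short argument shows each orbit contains exactly one positive-to-negative transition, giving exactly $k-1$. For a pair $(\sigma_i, \sigma_j)$ of negative cycles with $i<j$, I would verify that the relevant orbits contribute exactly $2|\supp(\overline{\sigma_j})| = 2\lambda'_j$ roots, where the ordering of $u_C$'s cycles — longest positive cycles first, then negative cycles with the largest-support (i.e.\ $\lambda_\nu$) cycle occupying the highest-indexed block — is exactly what makes the inequality $\sum_{i=2}^\nu (i-1)k_i \ge \sum_{i=1}^{\nu-1}(\nu-i)\lambda_i$ tight. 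Finally, I would check that orbits involving a negative cycle and a positive cycle, or two distinct positive cycles, contribute \emph{no} extra roots to $\Lambda(u_C)$ beyond what is already counted — this uses that in $u_C$ the negative cycles occupy the \emph{high} indices and the positive cycles the low indices, so that for $a$ in a positive block and $b$ in a negative block one has $a < b$ throughout the orbit and no sign appears on the $a$-coordinate, killing any transition; similarly for two positive cycles the relevant root $e_a - e_b$ with $a$ in the earlier (higher-index in the interval) block always stays on one side.

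The main obstacle — really the only delicate point — is the bookkeeping in the last two paragraphs: confirming that the cross-terms between negative cycles realize the bound with equality (which forces the specific left-to-right ordering of cycles chosen in the definition of $u_C$, with $\lambda_\nu$ placed last among the negatives) and, simultaneously, that no additional positive-to-negative transitions are hiding in the negative/positive or positive/positive cross-orbits. Once all four orbit types are accounted for with equality, summing gives $|\Lambda(u_C)| = \sum_{i=1}^{z}(\lambda_i - 1) + 2\sum_{i=1}^{\nu-1}(\nu-i)\lambda_i = n - z_C + 2\sum_{i=1}^{\nu_C-1}(\nu_C-i)\lambda_i$, which is the desired formula and also equals the lower bound of Lemma~\ref{lmin}; combined with $|\Sigma(u_C)| = \nu_C$ this completes the proof.
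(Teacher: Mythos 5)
Your overall strategy is viable and, once the bookkeeping is done, gives the lemma; but note it is organised differently from the paper's proof, which never returns to the orbit decomposition of Lemma~\ref{lmin}. The paper simply classifies each pair $1\leq i<j\leq n$: if $i,j$ lie in the same cycle of $\overline{u_C}$ then (since only the top entry of a negative cycle carries a sign) exactly one of $e_i\pm e_j$ lies in $\Lambda(u_C)$ when $j$ is the top of that cycle and neither does otherwise, giving $\lambda_k-1$ per cycle; if $i,j$ lie in different cycles then both $e_i+e_j$ and $e_i-e_j$ lie in $\Lambda(u_C)$ when $e_i\in\Sigma(u_C)$ and neither does otherwise, giving $2(n-i)$ for each signed $i$. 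Summing yields the formula directly, with no appeal to tightness of the earlier lower bound. Your intra-cycle orbit counts and your claim that positive/positive and positive/negative cross-orbits contribute nothing are correct, for exactly the reason you give (every cycle of $u_C$ is an ascending run of consecutive integers, the unique sign sits on its top element, and the positive blocks precede the negative blocks). The $\Sigma$ computation is the same as the paper's and is fine.

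There is, however, a concrete error at precisely the point you identify as the only delicate one: you have the ordering of the negative cycles of $u_C$ backwards. In $u_C$ the \emph{largest} negative cycle (length $\lambda_{\nu}$) occupies the \emph{lowest}-indexed negative block, and the smallest (length $\lambda_1$) occupies the highest block, ending at $n$; compare the paper's example $u_C=(\+{1}\+{2}\+{3})(\+{4}\+{5}\+{6}\mi{7})(\+{8}\mi{9})$ for $\lambda(C)=(2,4;3)$. A pair of negative blocks contributes exactly twice the size of the \emph{higher}-indexed block (the only long positive roots going negative are the $e_a\pm e_b$ with $a$ the signed top of the lower block), so the cross-negative total is $2\sum_{j=2}^{\nu}(j-1)k_j$ where $k_1\geq k_2\geq\cdots\geq k_{\nu}$ are the block sizes read from bottom to top, and this equals $2\sum_{i=1}^{\nu-1}(\nu-i)\lambda_i$ precisely because the small negative cycles sit at the top. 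With the arrangement you describe ($\lambda_{\nu}$ in the highest block) the total would instead be $2\sum_{j=2}^{\nu}(j-1)\lambda_j$, which in general strictly exceeds the required value, so the tightness you assert would fail as stated. Correct that description and carry out your sketched verifications on the actual $u_C$, and your argument closes.
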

\begin{proof} Again set $z = z_C$ and $\nu = \nu_C$. The size of $\Sigma(w)$ is simply the number of
minus signs appearing in the expression for $w$. Here, $\Sigma(w)
= \{e_n, e_{\mu_1}, \ldots, e_{\mu_{{\nu}-1}}\}$ and $|\Sigma(w)|
= {\nu}$.\smallskip\\
To find $\Lambda(w)$, consider pairs $(i, j)$ with $1\leq i < j \leq
n$. Suppose first that $i$ and $j$ are in the same cycle of
$\overline{w}$. Then $e_i \notin \Sigma(w)$ because only the maximal
element of each negative cycle has a minus sign above it. If $j =
\mu_{k}$ for some $k$, or if $j =n$, then exactly one of $e_i + e_j
\in \Lambda(w)$ or $e_i - e_j \in \Lambda(w)$ occurs (depending
whether $k < \nu$). Otherwise, $e_i - e_j \notin \Lambda(w)$ and
$e_i + e_j \notin \Lambda(w)$. Hence a cycle $(\+{\mu_{{k+1}} + 1}, \cdots, \+{\mu_{k}-1}, \arb{\mu_{k}})$
contributes exactly $\lambda_{k+1}-1$ roots to $\Lambda(w)$.\\

Now suppose that $i$ and $j$ are in different cycles. Hence
$\overline{w}(i) < \overline{w}(j)$. It is a simple matter to
check that if $e_i \in \Sigma(w)$, then $\{e_i + e_j, e_i - e_j\}
\subseteq \Lambda(w)$, whereas if $e_i \notin \Sigma(w)$, then
$e_i - e_j$ and $e_i + e_j$ are not in $\Lambda(w)$. Therefore each
$i$ with $e_i \in \Sigma(w)$ contributes exactly $2(n-i)$
additional roots to $\Lambda(w)$, and no roots are contributed
when $e_i \notin \Sigma(w)$.\\

Therefore \begin{eqnarray*}%
|\Lambda(w)| &=& \sum_{k=1}^z (\lambda_{k+1}-1) + \sum_{k: e_k \in
\Sigma(w)} 2(n-k)\\
&=& (n-z) + 2\left((n-n) + (n-\mu_1) + (n-\mu_2) + \cdots +
(n-\mu_{\nu-1})\right)\\
&=& (n-z) + 2\sum_{i=1}^{\nu-1} \sum_{j=1}^i \lambda_j\\
&=& n-z + 2\sum_{i=1}^{\nu-1} (\nu-i)\lambda_i.\end{eqnarray*} %
Therefore $|\Lambda(w)| = n - z_C + 2\sum_{i=1}^{\nu_C-1} (\nu_C -
i)\lambda_i$ and $|\Sigma(w)| = \nu_C$.
\end{proof}

\begin{cor} \label{cor1} Let $C$ be a conjugacy class of $W$. Then the
minimal $B$-length in $C$ is $$n + \nu_C - z_C +
2\sum_{i=1}^{\nu_C-1} (\nu_C - i)\lambda_i.$$ If $w \in C$ has
minimal $B$-length, then $|\Lambda(w)| = n - z_C +
2\sum_{i=1}^{\nu_C-1} (\nu_C - i)\lambda_i$ and $|\Sigma(w)| =
\nu_C$. Moreover, $u_C$ is a representative of minimal $B$-length
in $C$.\end{cor}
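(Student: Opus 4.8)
The plan is to deduce Corollary~\ref{cor1} directly by combining the lower bounds of Lemma~\ref{lmin} with the exact counts computed in Lemma~\ref{uc}. Recall that for any $w \in W$ we have $l_B(w) = |\Lambda(w)| + |\Sigma(w)|$. First I would invoke Lemma~\ref{lmin}: for every $w \in C$ it gives $|\Lambda(w)| \geq n - z_C + 2\sum_{i=1}^{\nu_C-1}(\nu_C-i)\lambda_i$ and $|\Sigma(w)| \geq \nu_C$, and adding these yields $l_B(w) \geq n + \nu_C - z_C + 2\sum_{i=1}^{\nu_C-1}(\nu_C-i)\lambda_i$ for all $w \in C$. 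Then I would observe that $u_C$ lies in $C$ by construction and that Lemma~\ref{uc} shows $|\Sigma(u_C)| = \nu_C$ and $|\Lambda(u_C)| = n - z_C + 2\sum_{i=1}^{\nu_C-1}(\nu_C-i)\lambda_i$, so that $l_B(u_C)$ equals the displayed bound. Hence the bound is attained, it is therefore the minimal $B$-length in $C$, and $u_C$ is a representative of minimal $B$-length.

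For the characterisation of minimal-length elements, I would argue as follows: if $w \in C$ has minimal $B$-length, then $|\Lambda(w)| + |\Sigma(w)|$ equals the sum of the two lower bounds of Lemma~\ref{lmin}; since each term on the left-hand side is bounded below by the corresponding term on the right, equality of the sums forces equality term by term, giving $|\Lambda(w)| = n - z_C + 2\sum_{i=1}^{\nu_C-1}(\nu_C-i)\lambda_i$ and $|\Sigma(w)| = \nu_C$, as claimed.

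I do not anticipate any real obstacle: the corollary is essentially a bookkeeping consequence of the two preceding lemmas. The only point needing a word of justification is that $u_C$ genuinely belongs to $C$, i.e.\ has signed cycle type $\lambda(C)$, but this is immediate from its given expression as a product of disjoint signed cycles of the prescribed lengths, exactly $\nu_C$ of which are negative.
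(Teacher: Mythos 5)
Your argument is correct and is exactly the intended deduction: the paper states Corollary \ref{cor1} without separate proof precisely because it follows by adding the lower bounds of Lemma \ref{lmin}, noting they are attained by $u_C$ via Lemma \ref{uc}, and observing that equality of the sum forces equality in each bound. Nothing is missing.
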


In the next corollary the element $u_C^t$ is the element obtained
from $u_C$ by taking its shortest positive cycle (which in this
context will be the cycle $(\+{n}\; \+{n-1}\; \ldots \+{m})$ for some odd
$m$), and putting minus signs over $n$ and $n-1$. In other words
it is the conjugate of $u_C$ by $t = (\mi n)$. Conjugation by $(\mi n)$ is the length preserving automorphism of $\hat W$ induced by the graph automorphism of the Coxeter graph $D_n$.

\begin{cor} \label{cor2} Let $C$ be a conjugacy class of $W$. If $C$ is also a conjugacy class, or a union of conjugacy classes,
of $\hat W$, then the minimal $D$-length of elements in the
class(es) is $n - z_C + 2\sum_{i=1}^{\nu_C-1} (\nu_C - i)\lambda_i$.
Moreover $u_C$ and $u_C^{t}$ are representatives of minimal
$D$-length in the class(es), with one in each $\hat W$-class if
the class $C$ splits.\end{cor}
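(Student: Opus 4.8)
The plan is to deduce this corollary from Corollary \ref{cor1} together with the standard relationship between $B$-length and $D$-length. First I would recall that for any $w \in \hat W$ we have $\ell_B(w) = l_D(w) + |\Sigma(w)|$, so that $B$-length and $D$-length of an element of $\hat W$ differ precisely by the number of short positive roots it makes negative. The key point, already contained in the proof of Lemma \ref{lmin}, is that $|\Sigma(w)| \geq \nu_C$ for every $w \in C$, and by Lemma \ref{uc} (or Corollary \ref{cor1}) this bound is attained by $u_C$ with $|\Lambda(u_C)| = n - z_C + 2\sum_{i=1}^{\nu_C-1}(\nu_C-i)\lambda_i$. Thus for any $w \in C$,
$$l_D(w) = \ell_B(w) - |\Sigma(w)| \ \geq\ \bigl(n + \nu_C - z_C + 2\textstyle\sum_{i=1}^{\nu_C-1}(\nu_C-i)\lambda_i\bigr) - |\Sigma(w)|,$$
and the obstacle in turning this into the desired lower bound on $l_D(w)$ is that making $|\Sigma(w)|$ large (to make the right side small) fights against making $\ell_B(w)$ small; one cannot naively optimise the two terms independently. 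Instead I would argue directly: by the proof of Lemma \ref{lmin}, $|\Lambda(w)| \geq n - z_C + 2\sum_{i=1}^{\nu_C-1}(\nu_C-i)\lambda_i$ for \emph{every} $w \in C$, and $l_D(w) = |\Lambda(w)|$, so the lower bound for $D$-length is immediate from that lemma without any reference to $\Sigma$ at all. Then Lemma \ref{uc} shows $u_C$ attains it.

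Next I would handle the two cases. If $C$ is a single conjugacy class of $\hat W$, then the $D$-length function on $C$ is exactly $w \mapsto |\Lambda(w)|$, the bound from Lemma \ref{lmin} applies, and $u_C \in C \subseteq \hat W$ realises it, giving the stated minimal $D$-length. If $C$ splits into two $\hat W$-classes $C'$ and $C''$ (which, as noted in the text, happens exactly when the signed cycle type consists only of even-length positive cycles, i.e.\ $\nu_C = 0$ — but note $u_C^t$ is defined using an odd shortest positive cycle, so I should double-check the precise splitting hypothesis relevant here and record that $u_C^t$ is $t$-conjugate to $u_C$ hence has the same $D$-length by the length-preserving graph automorphism), then I need to show $u_C$ and $u_C^t$ lie in different $\hat W$-classes. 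Since $t = (\mi n) \in W \setminus \hat W$ induces the graph automorphism of $D_n$, conjugation by $t$ interchanges the two split classes whenever they are genuinely distinct; so it suffices to confirm that $u_C$ and $u_C^t$ are not $\hat W$-conjugate, equivalently that $u_C^t$ is not obtained from $u_C$ by an even number of sign changes combined with a permutation — this follows because $t$-conjugation changes the parity of the relevant invariant distinguishing the two $D_n$-classes.

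The main obstacle I anticipate is being careful about exactly \emph{when} a $B_n$-class $C$ is a union of two $D_n$-classes versus remaining a single $D_n$-class, and verifying that in the splitting case the specific representatives $u_C$ and $u_C^t$ really do fall one into each part rather than both into the same part. This is a parity bookkeeping argument about the graph-automorphism invariant of $D_n$-classes; once it is pinned down, the length statement itself is just Lemma \ref{lmin} (lower bound, valid for all of $C$ and hence for each $\hat W$-subclass) combined with Lemma \ref{uc} (attainment by $u_C$, and by $u_C^t$ since graph-automorphism conjugation preserves $D$-length). I expect the actual inequalities to be completely routine given the earlier lemmas; the only real content is the class-splitting combinatorics.
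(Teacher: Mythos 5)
Your argument is correct and is essentially the paper's (implicit) one: the lower bound on $D$-length comes from Lemma \ref{lmin} via $l_D(w)=|\Lambda(w)|$, attainment from Lemma \ref{uc}, and $u_C^t$ is handled by the remark that conjugation by $t=(\mi n)$ is the length-preserving graph automorphism of $\hat W$, which necessarily interchanges the two $\hat W$-classes whenever $C$ splits (so no separate non-conjugacy check is needed). One small clarification on the point you flagged: in the description of $u_C^t$, ``for some odd $m$'' means the integer $m$ (the smallest entry of the shortest positive cycle) is odd, not that the cycle has odd length, so there is no conflict with the splitting condition that all cycles be positive of even length.
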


\begin{thm} \label{maxl} Let $C$ be a conjugacy class of $W$ and $w \in C$.
Let $C_0$ be the conjugacy class of $ww_0$ where $w_0$ is the
longest element of $W$. Then the maximal $B$-length of elements of
$C$ is $n^2 - |\Lambda(u_{C_0})| - |\Sigma(u_{C_0})|$, with $u_{C_0}w_0$ being an element of maximal $B$-length.
If $C$ is a conjugacy class or union of conjugacy classes
of $\hat W$, the maximal $D$-length of elements of $C$ is
$n^2 - n - |\Lambda(u_{C_0})|$. Moreover $u_{C_0}w_0$ and $u_{C_0}^tw_0$ are representatives of
maximal $D$-length in the class(es), with one in each $\hat W$-class if $C$ is a split class.\end{thm}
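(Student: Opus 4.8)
The plan is to exploit the longest element $w_0$ of $W$. In type $B_n$, $w_0$ is the central involution $-1$: it sends every root $\alpha$ to $-\alpha$, so $\ell_B(w_0) = |\Phi^+| = n^2$. Because $w_0 \in Z(W)$, the set $C_0 = Cw_0$ is again a conjugacy class of $W$ and $v \mapsto vw_0$ is a bijection $C \to C_0$; this is precisely where working inside $W$ rather than $\hat W$ pays off, since the longest element $\hat w_0$ of $\hat W$ is central only when $n$ is even, and $Cw_0$ need not lie in $\hat W$ at all when $n$ is odd. The first step is to record the length identity this gives. For $v \in W$ and $\alpha \in \Phi^+$ we have $vw_0(\alpha) = -v(\alpha)$, so $vw_0(\alpha) \in \Phi^-$ exactly when $v(\alpha) \in \Phi^+$; as $v$ preserves root lengths, it maps $\droot$ and $\short$ to themselves, and hence $\Lambda(vw_0) = \droot^+ \setminus \Lambda(v)$ and $\Sigma(vw_0) = \short^+ \setminus \Sigma(v)$. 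Consequently, for every $v \in W$, $|\Lambda(v)| + |\Lambda(vw_0)| = |\droot^+| = n^2 - n$ and $\ell_B(v) + \ell_B(vw_0) = |\Phi^+| = n^2$.

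For the assertion about $B$-length: the bijection $v \mapsto vw_0$ together with $\ell_B(v) = n^2 - \ell_B(vw_0)$ shows that the maximal $B$-length on $C$ equals $n^2$ minus the minimal $B$-length on $C_0$. By Corollary \ref{cor1} the latter is $\ell_B(u_{C_0})$, attained at $u_{C_0}$, and $\ell_B(u_{C_0}) = |\Lambda(u_{C_0})| + |\Sigma(u_{C_0})|$ by definition of $B$-length. Hence the maximal $B$-length on $C$ is $n^2 - |\Lambda(u_{C_0})| - |\Sigma(u_{C_0})|$, and it is attained at $u_{C_0}w_0 \in C_0w_0 = C$.

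For the assertion about $D$-length, assume $C$ is a conjugacy class or union of conjugacy classes of $\hat W$, so $C \subseteq \hat W$ and $\ell_D(v) = |\Lambda(v)|$ for $v \in C$. The same bijection and the identity $|\Lambda(v)| = (n^2 - n) - |\Lambda(vw_0)|$ reduce matters to minimising $|\Lambda(\cdot)|$ over the $W$-conjugacy class $C_0$; by Lemmas \ref{lmin} and \ref{uc} this minimum is $|\Lambda(u_{C_0})|$, attained at $u_{C_0}$ (it is essential here that $|\Lambda(\cdot)|$ is defined on all of $W$, as $C_0$ need not lie in $\hat W$). Therefore the maximal $D$-length on $C$ is $n^2 - n - |\Lambda(u_{C_0})|$, attained at $u_{C_0}w_0$; this element lies in $C \subseteq \hat W$ because $u_{C_0} = cw_0$ for some $c \in C$ gives $u_{C_0}w_0 = c$. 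Moreover $u_{C_0}^tw_0 = (u_{C_0}w_0)^t$ for $t = (\mi n)$ (using centrality of $w_0$), so $u_{C_0}^tw_0$ lies in the $W$-class $C$ and has the same, hence maximal, $D$-length, since conjugation by $t$ is the length-preserving graph automorphism of $\hat W$.

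It remains to treat split classes. If $C$ is a split class, then $\lambda(C)$ consists only of even-length positive cycles; since multiplication by $w_0$ replaces a cycle of length $k$ carrying $s$ minus signs by a cycle of length $k$ carrying $k - s$ minus signs on the same support, $\lambda(C_0) = \lambda(C)$ is again all-even positive, so $C_0$ splits as well. By Corollary \ref{cor2}, $u_{C_0}$ and $u_{C_0}^t$ lie in the two distinct $\hat W$-classes comprising $C_0$. Conjugation by $t$ interchanges the two $\hat W$-classes comprising the split $W$-class $C$ --- if it fixed one of them, we would obtain an element of $W \setminus \hat W$ centralising a member of $C$, contradicting the fact that the $W$-centralisers of the elements of a split class all lie in $\hat W$ --- and therefore $u_{C_0}w_0$ and $u_{C_0}^tw_0$ lie in different $\hat W$-classes of $C$, as required. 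The only genuinely delicate point is this last split-class bookkeeping; everything else is a direct application, via the central involution $w_0$, of the minimal-length results of Corollaries \ref{cor1} and \ref{cor2} and Lemmas \ref{lmin} and \ref{uc}.
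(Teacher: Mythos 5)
Your proof is correct and takes essentially the same route as the paper: centrality of $w_0$ (acting as $-1$ on roots) gives $|\Lambda(vw_0)| = n^2-n-|\Lambda(v)|$ and $|\Sigma(vw_0)| = n-|\Sigma(v)|$, converting maximisation over $C$ into minimisation over $C_0$, which is then settled by Lemmas \ref{lmin} and \ref{uc} (equivalently Corollaries \ref{cor1} and \ref{cor2}). The only difference is that your split-class bookkeeping, via the fact that centralisers of elements of a split class lie in $\hat W$ so that conjugation by $t=(\mi n)$ swaps the two $\hat W$-classes, is spelled out in more detail than in the paper, which simply asserts that $u_{C_0}w_0$ and $u_{C_0}^tw_0$ land one in each $\hat W$-class.
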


\begin{proof} Let $C$ be a conjugacy class of $W$. Since $w_0$
is central, the $W$-conjugacy class $C_0$ of $ww_0$ is just
$Cw_0$. Moreover, for any root $\alpha$ we have $w_0(\alpha) =
-\alpha$. Therefore for all $x \in W$, $|\Lambda(xw_0)| = (n^2-n) -
|\Lambda(x)|$ and $|\Sigma(xw_0)| = n - |\Sigma(x)|$. (Note that
there are $n^2-n$ long positive roots and $n$ short positive roots.)
Let $u = u_{C_0}$. Then by Lemmas \ref{lmin} and \ref{uc},
we have that for all $v \in C_0$, $|\Lambda(v)| \geq
|\Lambda(u)|$ and $|\Sigma(v)| \geq |\Sigma(u)|$. Now every $x \in
C$ is of the form $vw_0$ for some $v \in C_0$. Hence for every $x
\in C$, we have
\begin{eqnarray*} |\Lambda(x)| &\leq& n^2 - n - |\Lambda(u)|\; \; \; \mbox{
and}\\ |\Sigma(v)| &\leq& n- |\Sigma(u)|.\end{eqnarray*}%
Also $|\Lambda(uw_0)| = n^2-n - |\Lambda(u)|$ and $|\Sigma(uw_0)|
= n - |\Sigma(u)|$. Therefore the maximal $B$-length in $C$ is
$n^2 - n - |\Lambda(u)| + n- |\Sigma(u)| = n^2 - |\Lambda(u)| -
|\Sigma(u)|$ and this is attained by the element $uw_0$. Moreover,
if $C$ is a conjugacy class (or union of conjugacy classes) of
$\hat W$, then the maximal $D$-length is $n^2 - n - |\Lambda(u)|$
and this is attained by $uw_0$ (or $(uw_0)^t$ if the class
splits).\end{proof}

Theorem \ref{thm1} now follows immediately from Theorem \ref{maxl} and
Lemma \ref{uc}. All that remains in this section is to prove Theorem \ref{thm2}.

\paragraph{Proof of Theorem \ref{thm2}}
Note that each element $w_{\lambda,\rho}$, where $\lambda = (\lambda_1, \ldots, \lambda_m)$ is a maximal split partition with respect to $\rho$, is of the form $w_0u_C$ for some $u_C$. In particular we have $z_C = m$ and $\nu_C = m - \rho$. Thus each element $w_{\lambda,\rho}$ has maximal $B$-length and maximal $D$-length in the class $Cw_0$. Moreover given a class $C'$ of $W$, setting $C = C'w_0$ we see that $w_0u_{C}$ is $w_{\lambda,\rho}$ for some suitable $\lambda, \rho$, and so $w_{\lambda,\rho}$ is of maximal $B$-length and $D$-length in $C'$. \qed \\

It is the task of the next section to show that these elements $w_{\lambda,\rho}$ have excess zero.

%
%

\section{Excess zero in types $B_n$ and $D_n$}\label{bdex}

The aim of this section is to prove Theorem \ref{main} for $W$ and $\hat W$. In order to do this we will show that the elements $w_{\lambda,\rho}$ described in Theorem \ref{thm2} have excess zero both in $W$ and (if applicable) in $\hat W$. To obtain the required involutions $\sigma$ and $\tau$ such that $N(\sigma) \cap N(\tau) = \emptyset$ and $\sigma\tau = w$, we modify the definition of $g_{b_1, \ldots, b_k}$ given in Section \ref{secan}. We will only need to consider sequences of consecutive integers here though. Let $\{a+1, a+2, \ldots, a+k\}$ be a sequence of consecutive positive integers in $\{1,\ldots, n\}$. Define $g_{[a,k]}$ to be the permutation of $W$ that reverses the sequence and fixes all other $c \in \{1, \ldots, n\}$. (Essentially this is just $g_{b_1, \cdots, b_k}$ where $b_1 = a+1$, $b_2 = a+2$, $\ldots$, $b_k = a+k$, but viewed as an element of $W$ rather than $\sym(n)$.) Thus
 $g_{[a,k]}(a+i) = a + k + 1 - i$ for $1\leq i \leq k$. That is, $$g = (\+{a + 1},\+{a+k})(\+{a+2},\+{a+k-1}) \cdots (\+{a+\lfloor k/2 \rfloor}, \+{a +\lceil k/2 \rceil + 1}).$$ In particular, $g_{[a,k]}$ is an involution.\\

We also define $h_{[a,k]}$ to be $g_{[a,k]}$ with the plus signs replaced by minus signs. Thus $h_{[a,k]}(a+i) = -(a+k+1) + i$ for $1\leq i \leq k$. So $$h_{[a,k]} = \left\{\begin{array}{ll}
(\mi{a + 1},\mi{a+k})(\mi{a+2},\mi{a+k-1}) \cdots (\mi{a +\textstyle\frac{k}{2}}, \mi{a + \textstyle\frac{k}{2} + 1}) & \text{ if $k$ even};\\
(\mi{a + 1},\mi{a+k})(\mi{a+2},\mi{a+k-1}) \cdots (\mi{a + \lfloor \textstyle\frac{k}{2} \rfloor}, \mi{a + \lceil \textstyle\frac{k}{2} \rceil + 1}), (\mi{a + \lceil\textstyle\frac{k}{2}\rceil}) & \text{ if $k$ odd.}\end{array}\right.$$ In particular, $h_{[a,k]}$ is an involution. Moreover $h_{[a,k]}$ is order preserving on the intervals $[1,a]$, $[a+1,a+k]$ and $[a+k+1,n]$.\\

As an example $g_{[1,6]} = (\+ 2 \+ 7)(\+ 3 \+ 6)(\+ 4 \+5)$ and $h_{[3,5]} = (\mi 4 \mi 8)(\mi 5 \mi 7)(\mi 6)$.\\

Next we define two kinds of cycle and some involutions which are relevant to our analysis of the elements $w_{\lambda, \rho}$. Define \begin{align*}
w^-_{[a,k]} &= (\mi{a+1}, \mi{a+2}, \ldots, \mi{a+k-1}, \mi{a+k})\\
\sigma(w^-_{[a,k]}) &= h_{[a,k]}\\
\tau(w^-_{[a,k]}) &= g_{[a,k-1]}\\
w^+_{[a,k]} &= (\mi{a+1}, \mi{a+2}, \ldots, \mi{a+k-1}, \+{a+k})\\
\sigma(w^+_{[a,k]}) &= h_{[a+1,k-1]}\\
\tau(w^+_{[a,k]}) &= g_{[a,k]}
\end{align*}

\begin{lemma}
Let $w$ be either $w^-_{[a,k]}$ or $w^+_{[a,k]}$. Then writing $\sigma = \sigma(w)$ and $\tau = \tau(w)$ we have that $w = \sigma \tau$, where $\sigma$ and $\tau$ are both involutions.
\end{lemma}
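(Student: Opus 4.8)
The plan is to verify directly, in each of the two cases, that the stated product $\sigma\tau$ equals $w$ as a signed permutation, by tracking where each point $a+i$ of the support is sent. Since $\sigma$ and $\tau$ are each visibly involutions (they are products of disjoint transpositions of the form $(\arb{x},\arb{y})$, possibly together with a single sign-flip $(\mi{x})$ in the odd case), the only substantive claim is the factorisation $w=\sigma\tau$. All points outside $[a+1,a+k]$ are fixed by both $\sigma$ and $\tau$ in both cases, so it suffices to compute $\sigma\tau(a+i)$ for $1\le i\le k$ and compare with the cyclic action of $w$.

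For $w=w^-_{[a,k]}$, we have $\tau=g_{[a,k-1]}$, which reverses $[a+1,a+k-1]$ and fixes $a+k$; explicitly $\tau(a+i)=a+k-i$ for $1\le i\le k-1$ and $\tau(a+k)=a+k$. Then $\sigma=h_{[a,k]}$ sends $a+j\mapsto -(a+k+1)+j$ for $1\le j\le k$. Composing: for $1\le i\le k-1$, $\sigma\tau(a+i)=\sigma(a+k-i)=-(a+k+1)+(k-i)=-(a+1)+(-i+k)$... I would simplify carefully to get $\mi{a+i+1}$ for $i\le k-1$, and $\sigma\tau(a+k)=\sigma(a+k)=\mi{a+1}$, which is exactly the action of the negative $k$-cycle $(\mi{a+1},\ldots,\mi{a+k})$. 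This matches $w^-_{[a,k]}$.

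For $w=w^+_{[a,k]}$, we have $\tau=g_{[a,k]}$ reversing the whole block $[a+1,a+k]$ with plus signs, so $\tau(a+i)=a+k+1-i$; and $\sigma=h_{[a+1,k-1]}$ acts only on $[a+2,a+k]$, sending $a+1+j\mapsto -(a+1+k-1+1)+j=-(a+k+1)+j$ for $1\le j\le k-1$, while fixing $a+1$. Composing: for $1\le i\le k-1$, $\tau(a+i)=a+k+1-i\in[a+2,a+k]$, and applying $\sigma$ gives $\mi{a+i+1}$ after simplification; for $i=k$, $\tau(a+k)=a+1$, which $\sigma$ fixes, giving $\+{a+1}$. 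This is exactly the action of the positive cycle $(\mi{a+1},\ldots,\mi{a+k-1},\+{a+k})$, so $w=\sigma\tau$ as required.

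This lemma is genuinely routine; there is no real obstacle, only bookkeeping with the arithmetic of indices and signs. The one place to be careful is the parity split in the definition of $h_{[a,k]}$ (the extra fixed-with-sign-flip point $(\mi{a+\lceil k/2\rceil})$ when $k$ is odd): one must check that the sign-flip formula $h_{[a,k]}(a+i)=-(a+k+1)+i$ holds uniformly for all $1\le i\le k$ regardless of parity, including at the middle point when $k$ is odd, where $a+i\mapsto -(a+i)$ — and indeed $-(a+k+1)+i=-(a+i)$ precisely when $2i=k+1$, i.e. at the fixed point. With that uniform formula in hand, the two computations above go through without case distinctions on parity, and the proof closes immediately.
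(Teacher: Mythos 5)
Your proposal is correct and follows essentially the same route as the paper: a direct pointwise computation of $\sigma\tau(a+i)$ for $1\le i\le k$ in each of the two cases, using the uniform formula $h_{[a,k]}(a+i)=-(a+k+1)+i$ (your parity check at the middle point is a nice, if optional, confirmation that this formula is indeed uniform). No gaps; the bookkeeping matches the paper's computation exactly.
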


\begin{proof} It is clear from the definitions that $\sigma$ and $\tau$ are involutions. First consider $w = w^-_{[a,k]}$. Then if $1\leq i \leq k-1$ we have $\sigma\tau(a+i) = \sigma(a+k-i) = -(a+k+1) + (k-i) = -(a + i+1)$, whereas $\sigma\tau(a+k) = \sigma(a+k) = -(a+k+1) + k = -(a+1)$. Therefore $w = \sigma\tau$ in this case.
Now consider $w = w^+_{[a,k]}$. If $1 \leq i \leq k-1$ we have $\sigma\tau(a+i) = \sigma(a+k+1-i) = \sigma((a+1) + (k-i))= -((a+1)+(k-1) + 1) + (k-i) = -(a + i + 1)$, whereas $\sigma\tau(a+k) = \sigma(a+1) = a+1$. Thus again $w=\sigma\tau$ and the proof is complete.\end{proof}

%


\begin{prop}
\label{maintypebd} Let $w = w_{\lambda,\rho}$ be the corresponding signed element of the maximal split partition $\lambda = (\lambda_1, \ldots, \lambda_m)$ (with respect to $\rho$). Then $e_B(w) = e_D(w) = 0$.
\end{prop}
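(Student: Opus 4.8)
The plan is to mimic the type $A$ argument from Proposition~\ref{maintypea}: decompose $w = w_{\lambda,\rho} = w_1\cdots w_m$ into its disjoint signed cycles, write each $w_i$ as a product $\sigma_i\tau_i$ of two involutions using the recipes $\sigma(w^\pm_{[a,k]})$, $\tau(w^\pm_{[a,k]})$ introduced just above (the cycle $w_i$ acts on the consecutive block $[\mu_i+1,\mu_{i+1}]$, so it is literally $w^-_{[\mu_i,\lambda_i]}$ when $i\le\rho$ and $w^+_{[\mu_i,\lambda_i]}$ when $i>\rho$), set $\sigma = \sigma_1\cdots\sigma_m$ and $\tau = \tau_1\cdots\tau_m$, and then show $N(\sigma)\cap N(\tau)=\emptyset$. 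By Equation~\eqref{leneq} this gives $e_B(w)=0$; and since all the $\sigma_i,\tau_i$ are products of long-root reflections (each $g_{[a,k]}$ and $h_{[a,k]}$ lies in $\hat W$ — note $h_{[a,k]}$ has an even number of sign changes when $k$ is even and we must check the parity bookkeeping so that $\sigma,\tau\in\hat W$), one gets $e_D(w)=0$ by the same computation with $\Lambda$ in place of $N$. First I would record that, since $\supp(w_i)$ are pairwise disjoint blocks of consecutive integers, $\sigma_i$ and $\tau_i$ commute with $\sigma_j,\tau_j$ for $i\ne j$, so $\sigma,\tau$ are genuinely involutions and $\sigma\tau=w$ by the previous lemma applied cycle-by-cycle.

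The combinatorial heart is computing $N(\sigma_i)$ and $N(\tau_i)$ explicitly as subsets of $\Phi^+$ (both long and short roots) and checking disjointness of $N(\sigma)=\dot\cup_i N(\sigma_i)$ from $N(\tau)=\dot\cup_i N(\tau_i)$. To get those two unions I would invoke Lemma~\ref{useful}: for $\ell\ne k$, $\sigma_\ell$ permutes $X_\ell\cup Y_\ell$-type blocks disjoint from $\supp(\sigma_k)$, hence fixes $N(\sigma_k)$ setwise, and similarly for $\tau$. Then, block by block, $\tau_i = g_{[\mu_i,\lambda_i-1]}$ or $g_{[\mu_i,\lambda_i]}$ is a reversal supported inside (almost all of) $[\mu_i+1,\mu_{i+1}]$, so $N(\tau_i)$ consists only of long roots $e_a\pm e_b$ with both indices in that block and no short roots (reversals have no sign changes); whereas $\sigma_i = h_{[\cdot,\cdot]}$ reverses-and-negates its block, so $N(\sigma_i)$ consists of long roots $e_a+e_b$ with both indices in $\sigma_i$'s block together with the short roots $e_a$ for $a$ in that block — crucially $N(\sigma_i)$ contains no $e_a-e_b$. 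Since $N(\tau)$ contains only roots of the form $e_a-e_b$ (differences, within a single $w_i$-block) while $N(\sigma)$ contains only sums $e_a+e_b$ and short roots $e_a$, the intersection is visibly empty.

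The main obstacle I anticipate is the careful case analysis at the "boundary" indices where the two involutions overlap — specifically the parity split ($\lambda_i$ even versus odd) in the definitions of $h_{[a,k]}$ and the off-by-one shifts ($g_{[a,k-1]}$ versus $g_{[a,k]}$, $h_{[a,k]}$ versus $h_{[a+1,k-1]}$), which determine exactly which short roots $e_a$ land in $\Sigma(\sigma_i)$ and which endpoint of the block is fixed. One must verify that the single "leftover" fixed point or short-root discrepancy never produces a root lying in both $N(\sigma)$ and $N(\tau)$; the sum-versus-difference dichotomy above should make this automatic once the four cases ($w^\pm$, $\lambda_i$ even/odd) are written out, but it needs to be checked rather than asserted. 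A secondary point is confirming that the maximality-of-length hypothesis is never actually used in this proposition — it is not; $e_B(w)=e_D(w)=0$ holds for every $w_{\lambda,\rho}$ with $\lambda$ maximal split — so Proposition~\ref{maintypebd} combined with Theorem~\ref{thm2} yields Theorem~\ref{main} for types $B_n$ and $D_n$, which is the goal of this section.
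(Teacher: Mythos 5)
Your overall strategy is exactly the paper's: the same involutions $\sigma_i=\sigma(w^{\pm}_{[\mu_i,\lambda_i]})$ and $\tau_i=\tau(w^{\pm}_{[\mu_i,\lambda_i]})$, commutativity across the disjoint blocks, Lemma \ref{useful} to get $N(\sigma)=\dot\cup_k N(\sigma_k)$ and $N(\tau)=\dot\cup_k N(\tau_k)$, and Equation \eqref{leneq}. However, there is a genuine error at the decisive step. You claim that $N(\sigma_i)$ consists only of sums $e_a+e_b$ with both indices in the block of $w_i$ together with short roots, and ``crucially contains no $e_a-e_b$''. That is false: $h_{[a,b]}$ negates the indices inside its block and fixes every index outside it, so for $a< s\le a+b< t\le n$ we get $h_{[a,b]}(e_s-e_t)=-e_{2a+b+1-s}-e_t\in\Phi^-$; hence every difference $e_s-e_t$ with $s$ inside the block and $t$ to its right lies in $N(\sigma_i)$ (as does $e_s+e_t$). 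Consequently the sum-versus-difference dichotomy on which your ``visibly empty'' conclusion rests does not hold, and the case analysis you flag as the main obstacle (parity of $\lambda_i$, off-by-one endpoints) is not where the real issue lies.

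The conclusion survives, and the repair is what the paper does: $N(\tau_k)\subseteq\{e_i-e_j:\mu_k<i<j\le\mu_{k+1}\}$, so every difference in $N(\tau)$ has both indices in a single block, whereas every difference occurring in $N(\sigma_k)$ has exactly one index in block $k$ and the other strictly beyond $\mu_{k+1}$; since the blocks partition $\{1,\ldots,n\}$, no root is of both kinds, and the sums and short roots in $N(\sigma)$ avoid $N(\tau)$ for the reason you give. Note also that the invariance $\sigma_\ell(N(\sigma_k))=N(\sigma_k)$ needed for Lemma \ref{useful} must be checked against this larger, correct description of $N(\sigma_k)$: it holds because $\sigma_\ell$ and $\tau_\ell$ stabilize setwise $\{1,\ldots,\mu_k\}$, $\{\mu_k+1,\ldots,\mu_{k+1}\}$ and $\{\mu_{k+1}+1,\ldots,n\}$, and the cross-block part of $N(\sigma_k)$ is closed under any sign or index change in its second coordinate, but your justification, based on the too-small $N(\sigma_k)$, needs redoing. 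Your parity remark is a worthwhile point the paper leaves implicit: $\tau$ has no sign changes and $\sigma$ has $\sum_{i\le\rho}\lambda_i+\sum_{i>\rho}(\lambda_i-1)$ of them, the same parity as $w$, so $\sigma,\tau\in\hat W$ whenever $w\in\hat W$, as required for $e_D(w)=0$.
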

\begin{proof} By definition, and recalling that $\mu_i = \sum_{j=1}^{i-1} \lambda_j$ we have $w = w_1 \cdots w_m$ where
$$w_i = \left\{\begin{array}{ll}
(\mi{\mu_i + 1}, \mi{\mu_i + 2}, \ldots, \mi{\mu_{i+1}-1}, \mi{\mu_{i+1}})
 & \text{ if $1\leq i \leq \rho$;}\\
(\mi{\mu_i + 1}, \mi{\mu_i + 2}, \ldots, \mi{\mu_{i+1}-1}, \+{\mu_{i+1}}) & \text{ if $ \rho < i \leq m$.}
\end{array}\right.$$
Therefore $$w_i = \left\{\begin{array}{ll}
 w^-_{[\mu_i,\lambda_i]}& \text{ if $1\leq i \leq \rho$;}\\
 w^+_{[\mu_i,\lambda_i]} & \text{ if $ \rho < i \leq m$.}
\end{array}\right.$$
For each $i$ set $\sigma_i = \sigma(w_i)$ and $\tau_i = \tau(w_i)$. Since the supports of $\overline{\sigma_i}$ and $\overline{\tau_i}$ are subsets of the support of $w_i$, it is clear that both $\sigma_i$ and $\tau_i$ commute with both $\sigma_j$ and $\tau_j$ whenever $i \neq j$. Hence $\sigma = \sigma_1\cdots \sigma_m$ and $\tau = \tau_1\cdots \tau_m$ are involutions with the property that $\sigma\tau = w$.
We must show that $N(\sigma) \cap N(\tau) = \emptyset$. \\

Consider a cycle $w_k$ of $w$. Then $\tau(w_k)$ is either $g_{[\mu_k,\lambda_k-1]}$ or $g_{[\mu_k,\lambda_k]}$. The action of $g$ is to reverse the order of the sequence $\mu_k + 1, \ldots, \mu_k + \lambda_k$, reverse the order of the sequence $-\mu_k, \ldots, -\mu_k - \lambda_k$ and fix all other integers. Hence \begin{equation} N(\tau(w_k)) \subseteq \{e_i - e_j: \mu_k < i < j \leq \mu_{k+1}\}\label{ntaukbd}.\end{equation}
On the other hand $\sigma(w_k)$ is either $h_{[\mu_k,\lambda_k]}$ or $h_{[\mu_k+1,\lambda_k-1]}$, so is of the form $h_{[a,b]}$ where $a \geq \mu_k$ and $a + b = \mu_{k+1}$.
We observe that \begin{equation}
\label{nsigmak1}
\Sigma(h_{[a,b]}) = \{e_{a+1}, \ldots, e_{a+b}\}\subseteq \{e_{\mu_k+1}, \ldots, e_{\mu_{k+1}}\}.\end{equation}
Recall that $h_{[a,b]}$ fixes $e_i$ for all $i \notin \{a + 1, \ldots, a+b\}$ and $h_{[a,b]}(e_{i}) = -e_{2a + b+1-i} $. From this we see that \begin{equation}
\Lambda(h_{[a,b]}) = \{e_{i} + e_j: a < i < j \leq a+b\}\cup \{e_i \pm e_j: a < i < a+b < j \leq n\}.
\end{equation} Therefore
\begin{equation}
\label{nsigmak2}
\Lambda(\sigma(w_k)) \subseteq \{e_{i} + e_j: \mu_k < i < j \leq \mu_{k+1}\}\cup \{e_i \pm e_j: \mu_k < i < \mu_{k+1} < j \leq n\}.
\end{equation}

For $\ell \neq k$, we note that $\sigma_{\ell}$ and $\tau_{\ell}$ fix all $i$ for $i \notin \{\mu_{\ell}+1,\ldots, \mu_{\ell+1}\}$. In particular they stabilize (setwise) the sets $\{1,\ldots, \mu_k\}$, $\{\mu_k+1,\ldots, \mu_{k+1}\}$ and $\{\mu_{k+1}+1, \ldots, n\}$. Therefore $\sigma_{\ell}(N(\sigma_k)) = N(\sigma_k)$. So we may apply Lemma \ref{useful} to conclude that $N(\sigma) = \dot{\cup}_{i=1}^mN(\sigma_k)$ and that $N(\tau) = \dot{\cup}_{i=1}^mN(\tau_k)$. \\

Equations \eqref{ntaukbd}, \eqref{nsigmak1} and \eqref{nsigmak2} now imply that $N(\tau) \cap N(\sigma) = \emptyset$. Therefore by Equation \eqref{leneq} we see that $\ell_B(w) = \ell_B(\sigma) + \ell_B(\tau)$ and therefore $e_B(w) = 0$. But also $N(\tau)\cap N(\sigma) = \emptyset$ implies that $\Lambda(\tau) \cap \Lambda(\sigma) = \emptyset$, and so we also have $\ell_D(w) = \ell_D(\sigma) + \ell_D(\tau)$, giving $e_D(w) = 0$ as required.
\end{proof}

\begin{cor} \label{why} Theorem \ref{main} holds for Coxeter groups of type $B_n$ and $D_n$.\label{bncor}\end{cor}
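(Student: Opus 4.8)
The plan is to assemble Corollary \ref{bncor} directly from the work already done in this section, together with the structural results of Section \ref{bdl}. The statement is that Theorem \ref{main} holds whenever $W$ is of type $B_n$ or of type $D_n$, so the proof splits into two cases, but both are driven by Proposition \ref{maintypebd}.

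First I would take $W$ to be of type $B_n$ and let $C'$ be an arbitrary conjugacy class of $W$. By Theorem \ref{thm2} (or equivalently by the final paragraph of the proof of Theorem \ref{thm2}, setting $C = C'w_0$), the class $C'$ contains an element $w = w_{\lambda,\rho}$ for a suitable maximal split partition $\lambda$ with respect to $\rho$, and this element has maximal $B$-length in $C'$. By Proposition \ref{maintypebd}, $e_B(w) = 0$. Since $e_B$ is precisely the excess $e$ when elements are viewed in $W$, and $B$-length is the ordinary length in $W$, this says exactly that $w$ is an element of maximal length in $C'$ with $e(w) = 0$. As $C'$ was arbitrary, Theorem \ref{main} holds for $W$ of type $B_n$.

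Next I would let $\hat W$ be of type $D_n$, regarded as the canonical index $2$ subgroup of a Coxeter group $W$ of type $B_n$, and let $\hat C$ be a conjugacy class of $\hat W$. Here one must be slightly careful about the distinction between $\hat W$-conjugacy and $W$-conjugacy: either $\hat C$ is itself a full $W$-conjugacy class, or $\hat C$ together with its image under the graph automorphism forms one $W$-conjugacy class $C'$ (the split case). In either case $C' = \hat C \cup \hat C^{t}$ (with $\hat C^t = \hat C$ in the non-split case), where $t = (\mi n)$ induces the length-preserving graph automorphism, and $C'$ is a union of $\hat W$-conjugacy classes contained in $\hat W$. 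Applying Theorem \ref{thm2} to $C'$, we obtain $w = w_{\lambda,\rho}$ of maximal $D$-length in $C'$; moreover, by the "with one in each $\hat W$-class" clause of Theorem \ref{maxl} (together with the proof of Theorem \ref{thm2}), the elements $w_{\lambda,\rho}$ and $w_{\lambda,\rho}^t$ are representatives of maximal $D$-length with one lying in $\hat C$. Replacing $w$ by $w^t$ if necessary (note $e_D(w^t) = e_D(w) = 0$ since $t$ induces a length-preserving automorphism of $\hat W$, so conjugation by it preserves excess), we get $w \in \hat C$ of maximal $D$-length with $e_D(w) = 0$ by Proposition \ref{maintypebd}. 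Since $D$-length is ordinary length in $\hat W$ and $e_D$ is the excess there, this is precisely the assertion of Theorem \ref{main} for $\hat C$.

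I do not expect a genuine obstacle here: all the analytic content was discharged in Proposition \ref{maintypebd} (the construction of the commuting involutions $\sigma_i,\tau_i$ with $N(\sigma)\cap N(\tau)=\emptyset$) and in Theorem \ref{maxl}/Theorem \ref{thm2} (the fact that $w_{\lambda,\rho}$ achieves the maximal length). The only point requiring a little care is the bookkeeping in the $D_n$ case: ensuring that when a $W$-class $C'$ splits into two $\hat W$-classes, the pair $w_{\lambda,\rho}, w_{\lambda,\rho}^t$ really does have one representative in each, so that every $\hat W$-class is covered — and this is exactly what the last sentence of Theorem \ref{maxl} provides. Beyond that the corollary is a formal consequence of assembling the named results.
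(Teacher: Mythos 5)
Your proposal is correct and follows essentially the same route as the paper: type $B_n$ is immediate from Theorem \ref{thm2} and Proposition \ref{maintypebd}, and the split classes in type $D_n$ are handled by passing to the $W$-class $\hat C \cup \hat C^{t}$ and conjugating by $t=(\mi n)$, which preserves length and hence excess. The paper merely spells out the last point by conjugating the involutions $\sigma,\tau$ explicitly, which is the same argument you indicate in parentheses.
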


\begin{proof}
If $W$ is of type $B_n$, then by Theorem \ref{thm2} every conjugacy class $C$ of $W$ contains an element of the form $w_{\lambda,\rho}$ for suitable $\lambda$ and $\rho$, and this element has maximal $B$-length in $C$. By Proposition \ref{maintypebd} $w_{\lambda,\rho}$ has excess zero. Now consider $\hat W$ of type $D_n$, and let $C$ be a conjugacy class of $\hat W$. If $C$ is also a conjugacy class of $W$, then again $C$ contains some $w_{\lambda, \rho}$, which has maximal $D$-length and excess zero. If $C$ is not a conjugacy class of $W$ then $C \cup C^{(\mi n)}$ is a conjugacy class of $W$ (as conjugation by $(\mi n)$ is a length preserving map corresponding to the non-trivial graph automorphism of $D_n$), so for some $w = w_{\lambda, \rho}$ we have either $w$ or $w^{(\mi n)} \in C$. Now $e(w) = 0$, which means there are $\sigma, \tau$ involutions such that $w = \sigma\tau$ and $\ell(w) = \ell(\sigma) + \ell(\tau)$. Hence $w^{(\mi n)} = \sigma^{(\mi n)}\tau^{(\mi n)}$ and, since conjugation by $(\mi n)$ is a length-preserving map, we have $\ell(w^{(\mi n)}) = \ell(\sigma^{(\mi n)}) + \ell(\tau^{(\mi n)})$. Hence Corollary \ref{why} holds.\end{proof}

\section{Conclusion}\label{secend}

\paragraph{Proof of Theorem \ref{main}}
Observe that every finite Coxeter group $W$ is a direct product of irreducible Coxeter groups. If $W = W_1 \times \cdots \times W_n$ for some $W_i$, then it is easy to see that for $w = (w_1,\ldots, w_n) \in W$, we have $\ell(w) = \ell(w_1) + \cdots + \ell(w_n)$ and $e(w) = e(w_1) + \cdots + e(w_n)$. Moreover $w$ is of maximal length in some conjugacy class $C$ of $W$ if and only if each $w_i$ is of maximal length in a conjugacy class of $W_i$. Therefore Theorem \ref{main} holds if and only if it holds for all finite irreducible Coxeter groups. Theorem \ref{main} has already been proved for types $A_n$, $B_n$ and $D_n$ (Proposition \ref{maintypea} and Corollary \ref{bncor}). The exceptional groups $E_6$, $E_7$, $E_8$, $F_4$, $H_3$ and $H_4$ were checked using the computer algebra package {\sc Magma}\cite{magma}. In each case there is at least one (usually many) elements of maximal length and excess zero in every conjugacy class. Finally it is easy to check that every element of the dihedral group has excess zero, so the result is trivially true. Thus Theorem \ref{main} holds for every finite irreducible Coxeter group, and hence for all finite Coxeter groups.\qed \\

It is not the case that every element of maximal length in a conjugacy class always has excess zero. If $W$ is of type $E_6$, then in 23 of the 25 conjugacy classes every element of maximal length has excess zero. For the remaining two classes the situation is as follows. In the first class the elements have order 3. The maximal length of elements is 22, and there are 146 elements of maximal length. Of these, 134 have excess zero and the remaining 12 have excess 2. Using the standard generators $w_1, w_2, w_3, w_4, w_5, w_6$ for $E_6$, a representative of this class is $w_2w_3w_1w_4w_2w_3w_5w_4w_2w_3w_6w_5w_4w_3$. In the second class elements have order 6. The maximal length of elements is 20; there are 180 elements of maximal length of which 136 have excess zero and 44 have excess 2. A representative of this class is $w_5w_4w_2w_3w_1w_4w_3w_5w_6w_5w_4w_2w_3w_1$. If $W$ is of type $E_7$, then every element of maximal length in 59 of the 60 conjugacy classes has excess zero. In the remaining class,
  elements have order 3. The maximal length is 54, and there are 708 elements of maximal length, all but 50 of which have excess zero. A representative of this class is $w_1w_3w_1w_6w_5w_4w_2w_3w_1w_4w_3w_5w_4w_2w_6w_5w_7w_6w_5w_4w_2w_3w_1w_4w_3w_5$. \\

For the classical Weyl groups, we have checked all conjugacy classes of these groups with rank up to 10, and in each case every element of maximal length in a conjugacy class has excess zero. However there are examples of elements $w$ of maximal length with an arbitrarily large number of pairs of involutions $xy$ with $w = xy$, such that only one such pair has the property that $\ell(w) = \ell(x) + \ell(y)$. Elements like these `only just' have zero excess, see Lemma \ref{5.1} below. Because of this, and the examples in the exceptional groups, we are not sufficiently confident to conjecture that the pattern of maximal length elements having zero excess will continue, yet it might$\ldots$.

\begin{lemma}\label{5.1}
Let $W$ be of type $B_n$, for $n \geq 2$. There are at least $2^n$ pairs of involutions $(x,y)$ such that $xy = (\mi 1 \+ 2)$, but only one of these pairs has the property that $\ell(x) + \ell(y) = \ell((\mi 1 \+2))$.
\end{lemma}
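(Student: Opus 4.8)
The plan is to work in the type $B_n$ Coxeter group $W$ with the usual root system described in Section~\ref{bdl}, where $\ell(w) = |\Lambda(w)| + |\Sigma(w)|$, and to carry out two tasks: (i) exhibit $2^n$ distinct pairs of involutions $(x,y)$ with $xy = (\mi 1\,\+ 2)$, and (ii) compute $\ell(x) + \ell(y)$ for each and show it exceeds $\ell((\mi 1\,\+ 2))$ in all but one case. First I would record that $w := (\mi 1\,\+ 2)$ satisfies $\overline{w} = (1\,2)$, $\Sigma(w) = \{e_1\}$, and a direct check gives $\Lambda(w) = \{e_1 - e_2,\ e_1 + e_2\} \cup \{e_1 \pm e_j : 2 < j \le n\}$ together with nothing else, so $\ell(w) = 2n - 1$ (more cheaply: $w$ has order $4$ and one can just count inversions of the signed permutation directly, or invoke Corollary~\ref{cor1} with $\lambda(C) = (2; 1, 1, \ldots, 1)$, $\nu_C = 1$, $z_C = n-1$, giving minimal $B$-length $n + 1 - (n-1) + 0 = 2$ — but we want $w$ itself, which is the element $u_C w_0$-type maximal representative, so I would just count: $\ell(w) = 2n-1$).

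The family of factorizations: for each subset $S \subseteq \{3, 4, \ldots, n\}$ and each of the two ``base'' choices I would set $x_S$ to be the involution $(\mi 1)\cdot \prod_{j \in S}(\mi j)$ and $y_S = x_S w = x_S (\mi 1\,\+ 2)$; one checks $y_S$ is an involution by noting $y_S^2 = x_S w x_S w$ and that $x_S$ inverts $w$ in the sense $x_S w x_S = w^{-1}$ precisely when $x_S$ negates coordinate $1$ and coordinate $2$ appropriately — so in fact the correct family is $x_S = (\mi 2)\prod_{j\in S}(\mi j)$ paired with $y_S = (\mi 1)\prod_{j \in S}(\mi j)\cdot \overline{w}$-corrections; rather than guess the exact form here, the key structural point is that $w = (\mi 1\,\+ 2)$ has centralizer-coset structure allowing exactly $2^n$ involution factorizations, indexed by which of the $n$ ``sign slots'' one assigns to $x$ versus $y$, subject to the permutation part $(1\,2)$ being split as $\sigma(w)\tau(w)$ in the essentially unique way of Lemma~\ref{2.3} (here $\sigma(w) = (1\,2)$-type, $\tau(w) = 1$, or the reflections through $e_1\pm e_2$). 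Concretely: write $x = r\cdot s_x$, $y = r'\cdot s_y$ where $r, r'$ are the length-preserving sign-change parts and $s_x s_y = (1\,2)$ in $\sym(n)$; the permutation factorization is forced up to swapping, and the sign parts range over a coset, yielding $2^n$ pairs.

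For task (ii), for each such pair I would compute $|\Lambda(x_S)| + |\Sigma(x_S)| + |\Lambda(y_S)| + |\Sigma(y_S)|$ using the explicit description of $N$ for signed-permutation involutions: a product of negated fixed points $\prod_{j \in T}(\mi j)$ contributes $\Sigma = \{e_j : j \in T\}$ and $\Lambda = \{e_i + e_j : i < j,\ i \text{ or } j \in T, \text{ with sign rules}\} \cup \{e_i - e_j : \ldots\}$; carrying the $(1\,2)$ factor along multiplies some of these. The upshot I expect is that $\ell(x_S) + \ell(y_S) = \ell(w) + 2|S| \cdot (\text{something} \ge 1) + (\text{base excess})$, minimized exactly when $S = \emptyset$ and the base choice is the ``greedy'' one of Proposition~\ref{maintypebd} applied to the single cycle $(\mi 1\,\+ 2) = w^+_{[0,2]}$, for which $\sigma = h_{[1,1]} = (\mi 2)$ and $\tau = g_{[0,2]} = (\+ 1\,\+ 2)$ give $N(\sigma)\cap N(\tau) = \emptyset$ and hence excess $0$ — this is the unique optimal pair. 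The main obstacle will be task (ii): pinning down the exact $N$-sets for the $2^n$ competing pairs and verifying the strict inequality $\ell(x_S) + \ell(y_S) > \ell(w)$ for every pair other than the distinguished one, since the sign-change contributions interact with the transposition $(1\,2)$ and one must be careful that no ``accidental cancellation'' in Equation~\eqref{leneq}, i.e.\ a large overlap $N(x_S) \cap N(y_S^{-1})$, makes a second pair reduced. I would handle this by showing $N(x_S) \cap N(y_S^{-1})$ has size exactly $\tfrac12(\ell(x_S) + \ell(y_S) - \ell(w))$ and that this is positive unless $S = \emptyset$ and we picked the greedy base involutions, exploiting that negating an extra coordinate $j \ge 3$ forces at least the root $e_1 + e_j$ (or $e_1 - e_j$) into $\Lambda(x_S)$ but not out of the intersection.
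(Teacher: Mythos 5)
Your overall strategy (classify involution factorizations of $w=(\mi 1\,\+2)$ via the inverting condition $xwx=w^{-1}$, then use Equation \eqref{leneq} to test reducedness) is the right one, and you identify the correct unique reduced pair $x=(\mi 2)$, $y=(\+1\,\+2)$. However, as written the proposal has a genuine gap in \emph{both} halves of the lemma. For the existence of $2^n$ pairs you never actually produce the family: your concrete attempt ($x_S=(\mi 1)\prod_{j\in S}(\mi j)$ or $(\mi 2)\prod_{j\in S}(\mi j)$, i.e.\ two ``base'' choices times $2^{n-2}$ subsets) yields only $2^{n-1}$ pairs, and you then explicitly decline to pin down the correct form, substituting the unproved assertion that there are ``exactly $2^n$ factorizations indexed by which sign slot goes to $x$ versus $y$''. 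That indexing is not the right picture: since $\overline{x}$ must preserve $\{1,2\}$, every such $x$ factors as $x_1z$ with $z$ an involution supported on $\{3,\dots,n\}$ occurring in \emph{both} $x$ and $y$, and $x_1$ one of the \emph{four} involutions of the dihedral group on coordinates $1,2$ that invert $(\mi 1\,\+2)$, namely $(\mi 1)$, $(\mi 2)$, $(\mi 1\,\mi 2)$, $(\+1\,\+2)$; the count $2^n$ then comes from $4\cdot 2^{n-2}$ with $z$ ranging over sign-change involutions $\prod_{j\in S}(\mi j)$. Without identifying these four base factorizations you cannot reach $2^n$, nor can you run the uniqueness check.

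For the uniqueness half, observing that $|N(x)\cap N(y)|=\tfrac12(\ell(x)+\ell(y)-\ell(w))$ is just Equation \eqref{leneq}, so the whole content is to show this intersection is nonempty for every competing pair. Your sketch only addresses the effect of an extra sign change $(\mi j)$, $j\geq 3$, and says nothing about the three competing factorizations with $z=1$, e.g.\ $x=(\mi 1)$, $y=(\mi 1\,\mi 2)$ (where one sees directly $e_1\in N(x)\cap N(y)$), $x=(\mi 1\,\mi 2)$, $y=(\mi 2)$, and $x=(\+1\,\+2)$, $y=(\mi 1)$; checking these is exactly the short verification the paper performs after reducing to $z=1$, and it is missing here. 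A further (incidental) error: $\ell((\mi 1\,\+2))=2n-2$, not $2n-1$, since $w(e_1+e_2)=e_1-e_2\in\Phi^+$, so $e_1+e_2\notin\Lambda(w)$; this does not affect the structure of the argument, but it signals that the explicit $N$-set computations on which your task (ii) relies have not actually been carried out.
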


\begin{proof} The element $w = (\mi 1 \+ 2)$ is certainly of maximal length in its conjugacy class, by Theorem \ref{thm2}.
If $x$ is an involution such that $xy = w$ for some involution $y$, then $w^x = w^{-1}$. Thus $x = x_1x_2$ where $x_1$ and $x_2$ are commuting involutions, $x_2$ fixes $1$ and $2$, and $x_1$ is either $(\mi 1)$, $(\mi 2)$, $(\mi 1 \mi 2)$ or $(\+ 1 \+ 2)$. We can then determine $y$, and the upshot is that we get the following possibilities, where here $z$ is any involution fixing $1$ and $2$.
\begin{center}
\begin{tabular}{cc}
$x$ & $y$\\
$(\mi 1)z$ &$(\mi 1 \mi 2)z$\\
 $(\mi 2)z$& $(\+ 1 \+ 2)z$\\
 $(\mi 1 \mi 2)z$& $(\mi 2)z$\\
 $(\+ 1 \+ 2)z$& $(\mi 1)z$
\end{tabular}
\end{center}
If $N(x) \cap N(y) = \emptyset$ then clearly we must have $z=1$. It is now a quick check to show that the only possibility is $x = (\mi 2)$, $y = (\+ 1 \+ 2)$. The number of possible pairs $(x,y)$ is four times the number of involutions in a Coxeter group of type $B_{n-2}$, which is
at least $2^{n-2}$, because for all subsets $\{a_1, \ldots, a_k\}$ of size $k$ of $\{3, 4, \ldots, n\}$ the element $(\mi a_1)\cdots (\mi a_k)$ is an involution.\end{proof}

\end{document}